\documentclass[11pt]{amsproc}
\usepackage{hyperref,xy,doi}
\usepackage[dvipsnames]{xcolor}
\usepackage{tikz}
\usepackage{relsize}
\xyoption{all}
\numberwithin{equation}{section}
\hypersetup{
	colorlinks   = true, 
	urlcolor     = blue, 
	linkcolor    = magenta, 
	citecolor   = blue 
}

\newenvironment{customthm}[1]
{\innercustomthm}
{\endinnercustomthm}

\makeatletter
\@namedef{subjclassname@2020}{%
	\textup{2020} Mathematics Subject Classification}
\makeatother

\newtheorem{theorem}{Theorem}[section]
\newtheorem{corollary}[theorem]{Corollary}
\newtheorem{lemma}[theorem]{Lemma}
\newtheorem{prop}[theorem]{Proposition}
\theoremstyle{definition}
\newtheorem{definition}[theorem]{Definition}
\theoremstyle{remark}
\newtheorem{remark}[theorem]{Remark}
\newtheorem{example}[theorem]{Example}

\DeclareMathOperator{\Sym}{Sym}

\newcommand{\CC}{\mathbb C}
\newcommand{\sgn}{\mathrm{sgn}}

\newcommand{\mom}[1]{\left\langle #1\right\rangle}
\newcommand{\smo}[1]{\left\{#1\right\}}

\newcommand{\cc}[3]{\mathcal C_#1(#2,#3)}
\newcommand{\cd}[3]{\mathcal B_#1(#2,#3)}
\newcommand{\rr}[3]{\textcolor{red}{\mathcal R_#1(#2,#3)}}
\newcommand{\gr}[3]{\textcolor{RoyalBlue}{\mathcal B_#1(#2,#3)}}
\newcommand{\rd}[3]{\textcolor{red}{\mathcal U_#1(#2,#3)}}
\newcommand{\gd}[3]{\textcolor{RoyalBlue}{\mathcal L_#1(#2,#3)}}
\newcommand{\mch}[2]{\ensuremath{\left(\kern-.3em\left(\genfrac{}{}{0pt}{}{#1}{#2}\right)\kern-.3em\right)}}
\newcommand{\bigchar}{\mathlarger{\mathlarger{\chi}}}
\newcommand{\pring}{\CC[X_1,X_2,\dotsb]}

\newcommand{\expn}{\text{exp}}
\title[Restriction Coefficients]{Some Restriction Coefficients for the\\Trivial and Sign Representations}

\author[Narayanan]{Sridhar P. Narayanan}
\address{Indian Institute of Technology Bombay, Mumbai}
\email{sridharp.narayanan@gmail.com}

\author[Paul]{Digjoy Paul}
\address{Chennai Mathematical Institute, Chennai}
\email{digjoypaul@gmail.com}

\author[Prasad]{Amritanshu Prasad}
\address{The Institute of Mathematical Sciences (HBNI), Chennai}
\email{amri@imsc.res.in}

\author[Srivastava]{Shraddha Srivastava}
\address{Indian Institute of Science, Bangalore}
\email{maths.shraddha@gmail.com}

\begin{document}

\begin{abstract}
  We use character polynomials to obtain a positive combinatorial interpretation of the multiplicity of the sign representation in irreducible polynomial representations of $GL_n(\CC)$ indexed by two-column and hook partitions.
  Our method also yields a positive combinatorial interpretation for the multiplicity of the trivial representation of $S_n$ in an irreducible polynomial representation indexed by a hook partition.
\end{abstract}
\subjclass[2020]{05E10, 05E05, 20C30}
\keywords{Plethysm, character polynomials, moments, restriction problem, sign-reversing involution.}
\maketitle
\section{Introduction}
\label{sec:intro}
The representation theory of symmetric groups and general linear groups lies at the heart of algebraic combinatorics.
The irreducible polynomial representations of the general linear group $GL_n(\CC)$ {are given by Weyl modules}  $W_\lambda(\CC^n)$, for partitions $\lambda$ with at most $n$ parts, while the irreducible representations of the symmetric group $S_n$ {are given by Specht modules} $V_\mu$, for partitions $\mu$ of $n$.

Decomposing into irreducibles the restriction of $W_\lambda(\CC^n)$ to the subgroup of permutation matrices, which is isomorphic to $S_n$, we have
\begin{displaymath}
\text{Res}^{GL_n(\CC)}_{S_n} W_\lambda(\CC^n)= \bigoplus_{\mu} V_{\mu}^{\oplus r_{\lambda \mu}},
\end{displaymath}
where the multiplicities $r_{\lambda \mu}$ are called the \emph{restriction coefficients}. A combinatorial description of these coefficients is a natural and, as yet, unresolved question, which we will henceforth call \emph{the restriction problem}. 
 
Littlewood \cite{MR95209} proved that
\begin{align}\label{al:identity}
s_{\mu}[1+h_1+h_2+\dotsb]= \sum _{\lambda}r_{\lambda \mu}s_\lambda,
\end{align}
where $s_{\mu}[1+h_1+h_2+\dotsb]$ is the plethystic substitution of the sum of the complete homogeneous symmetric functions $h_i$ into the Schur function $s_\mu(x_1,x_2,\dotsc)$.
Concretely, it is the substitution of the monomials occurring in $1+h_1+h_2+\dotsb$ into the variables $x_1,x_2,\dotsc$ of $s_\mu$.
The reader is referred to \cite{MR2765321,MR3443860,MR1676282} for a complete treatment of symmetric functions and plethysm.
In \cite{Narayanan2021}, we interpreted Littlewood's identity in terms of an induction functor from representations of $S_n$ to polynomial representations of $GL_n(\CC)$.

In recent years, many new approaches to the restriction problem have been developed. Assaf--Speyer  \cite{assaf}  and Orellana--Zabrocki \cite{ORELLANA2021107943} independently defined the unique basis for the ring of symmetric functions such that the restriction coefficients appear in the linear expansion of Schur functions with respect to this basis. In \cite{ALCO_2022__5_5_1165_0}, Orellana--Zabrocki--Saliola--Schilling related the restriction problem to the subalgebra of uniform block permutations within the partition algebra. In \cite{ALCO_2021__4_2_189_0},  Heaton--Sriwongsa--Willenbring proved that every irreducible representation of $S_n$ occurs in at least one of a certain specified family of Weyl modules parameterized by two-row partitions.

Character polynomials encode the character values of certain families of $S_n$ representations across $n$ (see Section~\ref{sec:prelims}).
The application of character polynomials to the restriction problem was suggested by Garsia and Goupil in \cite{MR2576382}.
In \cite{ALCO_2021__4_4_703_0}, we computed character polynomials for the family $(W_\lambda(\CC^n))$ as $S_n$-representations across all $n$.
Using these character polynomials, we interpreted restriction coefficients for the trivial and sign representation as signed sums of vector partitions.
This allowed us to derive necessary and sufficient conditions for the occurrence of the trivial representation of $S_n$ in $W_{\lambda}(\CC^n)$ when $\lambda$ is a partition with two rows, or two columns, or is hook-shaped. 

In this paper, we introduce the notion of signed moment of a character polynomial at each nonnegative integer $n$ (see Definition \ref{def:mom}). In Theorems~\ref{th:gen2col},~\ref{th:gen_hook},~\ref{th:mom_gen_hook} we find the generating functions from which we deduce {the following} main results of this paper.
An expository account of some of these ideas can be found in \cite{frobchar}.
 
 \begin{customthm}{A}
 \label{th:sign_2row}
  For $n\geq 2$, let $(k,l)'$ be the conjugate of the partition $(k,l)$, where  $0 < l \leq k \leq n$. Then the sign representation of $S_n$ occurs in $W_{(k,l)'}(\CC^n)$ if and only if $(k,l)\in \{(n-1,0),(n,0),(n-1,1),(n,1)\}$. In all these cases it occurs with multiplicity one.
\end{customthm}
 
 \begin{customthm}{B}
  \label{theorem:hook-sign}
  For all $a,b\geq 0$ with $b+1\leq n$, the multiplicity of the sign representation of $S_n$ in $W_{(a+1,1^b)}(\CC^n)$ is the number of pairs $(\lambda,\mu)$ of partitions such that
  \begin{enumerate}
  \item $\lambda=(\lambda_1\geq \cdots \geq \lambda_{b}\geq 0)$ and $\mu=(\mu_1>\cdots >\mu_{n-b}\geq 0)$,
  \item $\lambda_1+\cdots+\lambda_b+\mu_1+\cdots +\mu_{n-b}=a+1$, and $\mu_1>\lambda_1$.
  \end{enumerate}
  \end{customthm}

 \begin{customthm}{C}
 \label{theorem:hook-triv}
For all $a,b\geq 0$ with $b+1\leq n$, the multiplicity of trivial representation of $S_n$ in $W_{(a+1,1^b)}(\CC^n)$ is the number of pairs $(\lambda,\mu)$ of partitions such that
  \begin{enumerate}
  \item $\lambda=(\lambda_1\geq \cdots \geq \lambda_{n-b}\geq 0)$ and $\mu=(\mu_1>\cdots >\mu_{b}\geq 0)$,
  \item $\lambda_1+\cdots+\lambda_{n-b}+\mu_1+\cdots +\mu_{b}=a+1$ and $\mu_1<\lambda_1-1$.
  \end{enumerate}
\end{customthm}
Owing to Littlewood's formula \eqref{al:identity}, each of these results reveals a term in the expansion of $s_\nu[1+h_1+h_2+\dotsb]$ in the basis of Schur functions, where $\nu$ is either of the partitions $(1^n)$ or $(n)$ (Corollaries~\ref{cor:pleth_twocol}, \ref{cor:pleth_hook_sign}, and \ref{cor:pleth_hook_triv}).
\section{Character polynomials and their moments}
\label{sec:prelims}

For $i\geq 1$, define the class function $X_i$  on $S_n$ as
\begin{displaymath}
X_i(\sigma)= \text{number of cycles of length } i \text{ in }\sigma.
\end{displaymath}
A polynomial in the ring $\pring$ is called a \emph{character polynomial}.
For each $n\geq 1$, let $V_n$ be a representation of $S_n$. 
The sequence $(V_n)$ is  \emph{eventually polynomial} if there exists a polynomial $p \in \CC[X_1,X_2,\dotsc]$ and an integer $N\geq 0$ such that, for each $n\geq N$ and each $\sigma \in S_n$
\begin{displaymath}
p(X_1(\sigma), X_2(\sigma),\dotsc)= \bigchar_{V_n}(\sigma),
\end{displaymath}
where $\bigchar_{V_n}$ is the character of the representation $V_n$. 

The polynomial $p$ is unique since if $f(X_1(\sigma),X_2(\sigma),\dotsc)=0$ for a polynomial $f$ and for all permutations $\sigma$, then it is zero for all sequences of nonnegative integer values, and must thus be identically zero. The polynomial $p$ is the character polynomial associated to the sequence $(V_n)$.
\begin{example}
\label{eg:charpoly}
Let $V_n= \Sym^2(\CC^n)$.
$\Sym^2(\CC^n)$ has  a basis indexed by multisets (collections of possibly repeated elements) of size $2$ drawn from $[n]=\{1,\dotsc,n\}$.
We calculate the character polynomial of $(V_n)$ by counting the number of fixed points of the action of an arbitrary permutation $\sigma \in S_n$ on this basis. 

A permutation $\sigma$ acts on a multiset $\{i,j\}$ as
$$\sigma \cdot \{i,j\}= \{\sigma(i),\sigma(j)\}.$$

A multiset is fixed by $\sigma$ if $\{i,j\}= \{\sigma(i),\sigma(j)\}$. Thus either $i=\sigma(i)$ and $j=\sigma(j)$, or $i=\sigma(j)$ and $j=\sigma(i)$. There are $X_1(\sigma)+{X_1(\sigma) \choose 2}$ ways of picking $i,j$ from the $X_1(\sigma)$ fixed points of $\sigma$ for the former case and $X_2(\sigma)$ ways of picking $i,j$ to be elements of a 2-cycle of $\sigma$ for the latter. 
Thus
\begin{displaymath}
p= X_1+{X_1 \choose 2}+X_2.
\end{displaymath}
\end{example}
\begin{definition}
\label{def:mom}
Given a polynomial $p\in \CC[X_1,X_2,\dotsc]$, the \emph{moment} of $p$ is defined to be
\begin{displaymath}
\mom{p}_n= \frac{1}{n!}\sum_{\sigma \in S_n} p(X_1(\sigma),X_2(\sigma),\ldots).
\end{displaymath}
 
The \emph{signed moment} of $p$ is defined to be
\begin{displaymath}
\smo{p}_n= \frac{1}{n!}\sum_{\sigma \in S_n} \text{sgn}(\sigma)p(X_1(\sigma),X_2(\sigma),\ldots),
\end{displaymath}
where $\text{sgn}(\sigma)$ is the value of the sign character on $\sigma$. 
\end{definition}
\begin{remark}
  Let $p$ be the character polynomial of the eventually polynomial sequence of representations $(V_n)$. Clearly $\mom{p}_n$ and $\smo{p}_n$ are respectively the multiplicities of the trivial and sign representation of $S_n$ in $V_n$.
  Moments were introduced and their generating functions were studied in \cite{ALCO_2021__4_4_703_0}.
  Here we extend that theory to signed moments.
\end{remark}

Given a partition $\lambda=(\lambda_1\geq \lambda_2\geq \dotsb\geq \lambda_l> 0)$, the \emph{length} of the partition $l(\lambda)=l$ is the number of nonzero parts of the partition. Let $a_i$ be the number of parts of $\lambda$ of size $i$. Then $\lambda$ is expressed in the \emph{exponential notation} as
\begin{displaymath}
\lambda= 1^{a_1}2^{a_2}\dotsb.
\end{displaymath}
Define $$z_\lambda= \frac{|\lambda|!}{\prod_i i^{a_i}a_i!},$$ the cardinality of the centralizer of a permutation of cycle-type $\lambda$. Here $|\lambda|=\sum_i ia_i$ is the size of the partition $\lambda$. Let $\text{Par}$ denote the set of all partitions and let $\text{Par}_n$ denote the subset of partitions of $n$ for any nonnegative integer $n$. 
\begin{definition}
\label{def:choice_prod}
For a partition $\alpha=1^{a_1}2^{a_2}\dotsb$, define the following elements in $\pring$: $${X \choose \alpha} = \prod_i {X_i \choose a_i}$$ and $$\mch{X}{\alpha}= \prod_i \mch{X_i}{a_i},$$ where $\mch{X_i}{a_i}:= {X_i+a_i-1 \choose a_i}$.
\end{definition}
The sets $\{{X \choose \alpha}\mid \alpha \in \text{Par}\}$ and $\{{\mch{X}{\alpha}}\mid \alpha \in \text{Par}\}$ are bases of $\mathbb{C}[X_1,X_2,\dotsc]$. The former is called the \emph{binomial basis} of $\mathbb{C}[X_1,X_2,\dotsc]$.

\begin{theorem}
\label{th:binom}
  For a partition $\alpha$ we have
  \begin{displaymath}
    \smo{\binom X\alpha}_n =
    \begin{cases}
      \frac{\sgn(\alpha)}{z_\alpha} &\text{if } n \in \{|\alpha|,|\alpha|+1\},\\
      0 &\text{otherwise},
    \end{cases}
  \end{displaymath}
where $\sgn(\alpha)$ is the value of the sign representation on a permutation of type $\alpha$. 
\end{theorem}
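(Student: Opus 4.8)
The plan is to evaluate the signed moment $\smo{\binom{X}{\alpha}}_n$ by a direct combinatorial/generating-function computation, exploiting the fact that the functions $\binom{X_i}{a_i}$ count certain configurations inside a permutation and interact nicely with the sign character. Write $\alpha = 1^{a_1}2^{a_2}\cdots$, so $\binom{X}{\alpha} = \prod_i \binom{X_i}{a_i}$. For a permutation $\sigma \in S_n$, the quantity $\prod_i \binom{X_i(\sigma)}{a_i}$ counts the number of ways to choose, for each $i$, an unordered collection of $a_i$ distinct $i$-cycles of $\sigma$; equivalently, it counts the number of ways to select a sub-permutation $\tau$ of $\sigma$ whose cycle type is exactly $\alpha$ (sitting inside $\sigma$ on a subset of $[n]$ of size $|\alpha|$). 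Thus
\begin{displaymath}
\sum_{\sigma \in S_n} \sgn(\sigma)\binom{X(\sigma)}{\alpha}
= \sum_{\sigma \in S_n}\sgn(\sigma) \#\{\tau \subseteq \sigma : \mathrm{cycletype}(\tau) = \alpha\}.
\end{displaymath}
Interchanging the order of summation, one fixes a subset $S \subseteq [n]$ with $|S| = |\alpha|$ and a permutation $\tau$ of $S$ of cycle type $\alpha$, and sums $\sgn(\sigma)$ over all $\sigma \in S_n$ that restrict to $\tau$ on $S$. Since $\sgn(\sigma) = \sgn(\tau)\cdot\sgn(\sigma|_{[n]\setminus S})$ and $\sgn$ is multiplicative across the block decomposition, this inner sum factors as $\sgn(\tau)$ times $\sum_{\rho \in S_{[n]\setminus S}} \sgn(\rho)$.

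The key arithmetic fact I would invoke is that $\sum_{\rho \in S_m}\sgn(\rho)$ equals $1$ if $m = 0$, equals $1$ if $m = 1$, and equals $0$ for $m \geq 2$ (the number of even permutations equals the number of odd permutations once $m\geq 2$). Hence the inner sum vanishes unless $n - |\alpha| \in \{0,1\}$, giving the case distinction in the statement. When $n \in \{|\alpha|, |\alpha|+1\}$, the surviving contribution is $\sum_{S, \tau}\sgn(\tau)$, where $S$ ranges over $\binom{n}{|\alpha|}$ subsets and, for each, $\tau$ ranges over the permutations of $S$ of cycle type $\alpha$, of which there are $|\alpha|!/z_\alpha$; each such $\tau$ has $\sgn(\tau) = \sgn(\alpha)$. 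So the whole double sum equals $\binom{n}{|\alpha|}\cdot \frac{|\alpha|!}{z_\alpha}\cdot \sgn(\alpha)$. Dividing by $n!$: when $n = |\alpha|$ this is $\frac{1}{|\alpha|!}\cdot\frac{|\alpha|!}{z_\alpha}\sgn(\alpha) = \frac{\sgn(\alpha)}{z_\alpha}$, and when $n = |\alpha|+1$ it is $\frac{1}{(|\alpha|+1)!}\cdot(|\alpha|+1)\cdot\frac{|\alpha|!}{z_\alpha}\sgn(\alpha) = \frac{\sgn(\alpha)}{z_\alpha}$ as well. This matches the claimed formula.

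The main obstacle — really the only point requiring care — is the bookkeeping in the interchange of summation: making precise that $\prod_i\binom{X_i(\sigma)}{a_i}$ genuinely counts embedded sub-permutations of cycle type $\alpha$, and that when we fix such a $\tau$ on a subset $S$ the remaining freedom is exactly an arbitrary permutation of the complement with no constraint linking the two (in particular a long cycle of $\sigma$ is never partially counted, since $\tau$ must consist of full cycles of $\sigma$). Once this correspondence is set up cleanly, the sign factorization and the elementary identity $\sum_{\rho\in S_m}\sgn(\rho) = [m\le 1]$ finish the argument. An alternative, purely formal route — should the combinatorial description feel delicate — is to compute the exponential generating function $\sum_n \smo{\binom{X}{\alpha}}_n t^n$ using the exponential formula for the cycle-index-type sum $\frac{1}{n!}\sum_{\sigma\in S_n}\sgn(\sigma)\prod_i u_i^{X_i(\sigma)}$, which is known to equal $\prod_{i\ge 1}\exp\bigl((-1)^{i-1}u_i t^i/i\bigr)$; extracting the coefficient of $\prod_i \binom{u_i \text{-exponent}}{a_i}$ reproduces the same answer, and the $\{|\alpha|,|\alpha|+1\}$ support emerges because $\prod_i \exp\bigl((-1)^{i-1}t^i/i\bigr) = \exp\bigl(\sum_i (-1)^{i-1}t^i/i\bigr) = \exp\bigl(\log(1+t)\bigr) = 1+t$.
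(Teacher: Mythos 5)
Your proposal is correct, and its main route is genuinely different from the paper's. You argue directly at each fixed $n$: interpret $\binom{X(\sigma)}{\alpha}$ as counting sub-permutations $\tau\subseteq\sigma$ of cycle type $\alpha$ made of full cycles of $\sigma$, interchange the sums, factor $\sgn(\sigma)=\sgn(\tau)\sgn(\sigma|_{[n]\setminus S})$, and invoke $\sum_{\rho\in S_m}\sgn(\rho)=0$ for $m\geq 2$; the surviving count $\binom{n}{|\alpha|}\frac{|\alpha|!}{z_\alpha}\sgn(\alpha)$ divided by $n!$ gives $\frac{\sgn(\alpha)}{z_\alpha}$ in both cases $n=|\alpha|$ and $n=|\alpha|+1$, and your bookkeeping (that fixing $\tau$ on an invariant subset $S$ leaves exactly a free permutation of the complement) is sound. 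The paper instead computes the ordinary generating function $\sum_n \smo{\binom{X}{\alpha}}_n v^n$ by summing over cycle types $\beta$, factoring the sum as a product over cycle lengths $i$, and pulling out the factor $\frac{\sgn(\alpha)v^{|\alpha|}}{z_\alpha}$, leaving $\sum_n \frac{v^n}{n!}\sum_{w\in S_n}\sgn(w)=1+v$; so both proofs ultimately rest on the same vanishing of the signed permutation sum for $n\geq 2$, but yours is a per-$n$ bijective double count while the paper's is a generating-function manipulation. Your version is more elementary and makes transparent why the values at $n=|\alpha|$ and $n=|\alpha|+1$ coincide (the extra point is forced to be a fixed point), whereas the paper's version slots directly into the generating-function machinery it reuses for the later theorems; your sketched ``alternative formal route'' via the cycle-index exponential is essentially the paper's argument.
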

\begin{proof}
The signed moment $\{ {X \choose \alpha} \}= \frac{1}{n!}\sum_{w \in S_n}\text{sgn}(w){\beta(w) \choose \alpha}$, where $\beta(w)$ is the cycle-type of the permutation $w$. Gathering together permutations in each conjugacy class we have
\begin{align*}
  \sum_{n\geq 0} \smo{\binom X\alpha}_n v^n & = \sum_{n\geq 0} \sum_{\beta\vdash n=1^{b_1}2^{b_2}\dotsc} \frac 1{z_\beta} (-1)^{b_2+b_4+\dotsb}\binom\beta\alpha v^n\\
                                          & = \sum_{b_i\geq a_i} \prod_i \frac {(-1)^{(i-1)b_i}}{i^{b_i}b_i!}\frac{b_1!}{a_i!(b_i-a_i)!} v^{ib_i}\\
                                          & = \prod_i \frac{(-1)^{(i-1)a_i}v^{ia_i}}{i^{a_i}a_i!}\sum_{c_i\geq 0} \frac{(-1)^{(i-1)c_i}v^{ic_i}}{i^{c_i}c_i!}\\
                                          & = \frac{\sgn(\alpha)v^{|\alpha|}}{z_\alpha}\sum_{n\geq 0}\frac {v^n}{n!} \sum_{w\in S_n} \sgn(w)\\
  & = \frac{\sgn(\alpha)v^{|\alpha|}}{z_\alpha}(1+v),
\end{align*}
where the final equality is due to the signed sum of permutations being nonzero precisely when $n=0,1$.

The coefficient of $v^n$ is thus $\frac{\sgn(\alpha)}{z_\alpha}$ if $n=|\alpha|$ or $n=|\alpha|+1$, and is $0$ otherwise.
\end{proof}

Theorem \ref{th:binom} allows us thus to make general conclusions about the presence of the sign representation.

\begin{corollary}
 Let $(V_n)$ be an eventually polynomial sequence of representations of $S_n$. Then the sign representation does not occur in $V_n$ for sufficiently large $n$. In particular, the sign representation does not occur in the restriction of $W_\lambda(\CC^n)$ to $S_n$ for any $n>|\lambda|+1$. 
\end{corollary}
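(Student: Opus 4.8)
The plan is to expand the relevant character polynomial in the binomial basis $\{\binom X\alpha : \alpha\in\mathrm{Par}\}$ and apply Theorem~\ref{th:binom} term by term. For the first assertion, let $p$ be the character polynomial of $(V_n)$, so that $p(X_1(\sigma),\dotsc)=\bigchar_{V_n}(\sigma)$ for all $\sigma\in S_n$ once $n\geq N$. Since the binomial polynomials form a basis of $\pring$, I can write $p=\sum_{\alpha\in\mathcal F}c_\alpha\binom X\alpha$ for some finite set $\mathcal F\subseteq\mathrm{Par}$. Signed moments are linear in the polynomial argument, so $\smo p_n=\sum_{\alpha\in\mathcal F}c_\alpha\smo{\binom X\alpha}_n$, and by Theorem~\ref{th:binom} every summand vanishes as soon as $n>|\alpha|+1$. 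Setting $M=\max\{\,|\alpha|:\alpha\in\mathcal F\,\}$, this gives $\smo p_n=0$ for all $n>M+1$. By the Remark following Definition~\ref{def:mom}, for $n\geq N$ the number $\smo p_n$ is exactly the multiplicity of the sign representation of $S_n$ in $V_n$; hence that multiplicity is $0$ whenever $n>\max(N,M+1)$.

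For the ``in particular'' clause I would feed into this argument the explicit description of the character polynomial of the Weyl modules obtained in \cite{ALCO_2021__4_4_703_0}. What is needed from that work is twofold: first, that the sequence $(W_\lambda(\CC^n))$ of $S_n$-representations is eventually polynomial, with character polynomial $q_\lambda$ agreeing with $\bigchar_{W_\lambda(\CC^n)}$ for all $n\geq l(\lambda)$ --- in particular for every $n>|\lambda|+1$, since $l(\lambda)\leq|\lambda|$; and second, that in the binomial basis $q_\lambda$ is supported on partitions $\alpha$ with $|\alpha|\leq|\lambda|$. Running the previous paragraph with $p=q_\lambda$, $N=l(\lambda)$ and $M=|\lambda|$ then shows that the multiplicity of the sign representation in $\res{GL_n(\CC)}{S_n}W_\lambda(\CC^n)$ equals $\smo{q_\lambda}_n=0$ for every $n>|\lambda|+1$.

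I do not expect a genuine obstacle here: once Theorem~\ref{th:binom} is available, the corollary is a formal consequence of linearity and of the finiteness of binomial-basis expansions. The only points requiring care are bookkeeping ones --- that the character polynomial of $V_n$ captures the true character only for $n$ large, and that for $W_\lambda(\CC^n)$ both the validity range and the degree bound on the binomial-basis expansion are as stated --- and these are quoted from \cite{ALCO_2021__4_4_703_0} rather than reproved here.
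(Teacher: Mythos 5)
Your proposal is correct and follows essentially the same route as the paper: expand the character polynomial in the binomial basis, use linearity of signed moments together with Theorem~\ref{th:binom}, and invoke the degree bound $|\alpha|\leq|\lambda|$ from \cite{ALCO_2021__4_4_703_0} for the Weyl-module case. You simply spell out the bookkeeping (finiteness of the expansion, the validity range $n\geq l(\lambda)$) that the paper leaves implicit.
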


\begin{proof}
The first statement is a straightforward consequence of Theorem \ref{th:binom}. From \cite[Section~2.3]{ALCO_2021__4_4_703_0}, we know that the the expansion of the character polynomial of $W_\lambda(\CC^n)$ in the binomial basis involves partitions at most as large as $|\lambda|$.
\end{proof}

\section{Combinatorial interpretations for some restriction coefficients}
\label{sec:combint}
This section contains the main results of this paper. Here we give combinatorial interpretations for the multiplicity of the sign representation of $S_n$ in $W_\lambda(\CC^n)$ when $\lambda$ is a two-column or hook partition. The proof of the latter result is easily modified to yield the multiplicity of the trivial representation of $S_n$ when $\lambda$ is a hook partition. We recall the following character polynomials from \cite{ALCO_2021__4_4_703_0}

\begin{theorem}
\label{th:charHE}
 Let $\Sym^{k}(\CC^n)$ and $\bigwedge^{l}(\CC^n)$ be the $k$-th symmetric and $l$-th exterior powers of $\CC^n$ respectively. Then
\begin{enumerate}
\item The sequence of representations $(\Sym^{k}(\CC^n))$ is eventually polynomial, and the character polynomial is
\begin{displaymath}
H_k= \sum_{\alpha \vdash k} \mch{X}{\alpha}.
\end{displaymath}
\item The sequence of representations $(\bigwedge^{l}(\CC^n))$ is eventually polynomial, and the character polynomial is
\begin{displaymath}
E_l= \sum_{\alpha \vdash k} (-1)^{|\alpha|-l(\alpha)}{X\choose \alpha}.
\end{displaymath}
\end{enumerate} 
\end{theorem}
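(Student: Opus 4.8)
The plan is to generalize the fixed-point count of Example~\ref{eg:charpoly}: for each $n$ and each $\sigma\in S_n$ I would express the character value as a (signed) count of basis vectors fixed by $\sigma$, and then repackage that count as a sum indexed by partitions. Once we have, for every $n$, an expression for $\bigchar_{V_n}(\sigma)$ as the value at $(X_1(\sigma),X_2(\sigma),\dots)$ of the asserted polynomial, eventual polynomiality is immediate, and uniqueness of the character polynomial is the remark following the definition of eventual polynomiality in Section~\ref{sec:prelims}.

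For part~(1) I would use the monomial basis of $\Sym^k(\CC^n)$ indexed by size-$k$ multisets on $[n]$, equivalently by functions $m\colon[n]\to\NN$ with $\sum_{i\in[n]}m(i)=k$. Such a basis vector is fixed by $\sigma$ exactly when $m$ is constant on each cycle of $\sigma$, so a fixed vector is the same datum as a choice of a nonnegative integer on each cycle of $\sigma$ whose length-weighted sum equals $k$. Grouping the cycles of $\sigma$ by common length $i$ and writing $s_i$ for the total value assigned to the $X_i(\sigma)$ cycles of length $i$, the constraint becomes $\sum_i i\,s_i=k$; for a fixed tuple $(s_i)$ the number of admissible choices is the number of weak compositions of $s_i$ into $X_i(\sigma)$ parts, namely $\binom{X_i(\sigma)+s_i-1}{s_i}$, which is the value at $\sigma$ of $\mch{X_i}{s_i}$. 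Since a tuple of nonnegative integers $(s_i)$ with $\sum_i i\,s_i=k$ is exactly a partition $\alpha=1^{s_1}2^{s_2}\cdots$ of $k$ in exponential notation, summing over all such tuples gives
\[
\bigchar_{\Sym^k(\CC^n)}(\sigma)=\sum_{\alpha\vdash k}\mch{X}{\alpha}\big(X_1(\sigma),X_2(\sigma),\dots\big)=H_k\big(X_1(\sigma),X_2(\sigma),\dots\big).
\]

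For part~(2) I would use the basis $\{e_{i_1}\wedge\cdots\wedge e_{i_l}\}_{i_1<\cdots<i_l}$ of $\bigwedge^l(\CC^n)$, indexed by $l$-subsets $S\subseteq[n]$. Here $\sigma$ sends the basis vector attached to $S$ to $\pm$ the one attached to $\sigma(S)$, so only subsets fixed setwise by $\sigma$ contribute, and $S$ is fixed setwise if and only if it is a union of cycles of $\sigma$. For such an $S$ the sign contributed is that of the permutation $\sigma$ induces on $S$, i.e.\ of $\sigma|_S$; factoring $\sigma|_S$ into the cycles making up $S$ and using that an $i$-cycle has sign $(-1)^{i-1}$, if the lengths of those cycles form the partition $\alpha=1^{a_1}2^{a_2}\cdots\vdash l$ then this sign is $\prod_i((-1)^{i-1})^{a_i}=(-1)^{|\alpha|-l(\alpha)}$. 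The number of unions of cycles of $\sigma$ realizing a given $\alpha$ is $\prod_i\binom{X_i(\sigma)}{a_i}$, the value at $\sigma$ of $\binom{X}{\alpha}$, so summing over $\alpha\vdash l$ yields
\[
\bigchar_{\bigwedge^l(\CC^n)}(\sigma)=\sum_{\alpha\vdash l}(-1)^{|\alpha|-l(\alpha)}\binom{X}{\alpha}\big(X_1(\sigma),\dots\big)=E_l\big(X_1(\sigma),\dots\big);
\]
when $n$ is too small for an $l$-subset to exist, the binomials automatically vanish, consistently with $\bigwedge^l(\CC^n)=0$.

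The argument is essentially bookkeeping, so I do not expect a serious obstacle. The one point that needs genuine care is the sign in part~(2): one must check that the orientation sign $\sigma$ picks up on $e_{i_1}\wedge\cdots\wedge e_{i_l}$ is precisely the sign of the permutation $\sigma$ induces on $S$, and then match the product of the cycle-signs $(-1)^{i-1}$ against the exponent $|\alpha|-l(\alpha)=\sum_i(i-1)a_i$.
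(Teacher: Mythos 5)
Your proof is correct, and it is worth noting that this paper contains no proof of Theorem~\ref{th:charHE} to compare it against: the theorem is simply recalled from the authors' earlier work \cite{ALCO_2021__4_4_703_0}, so your argument supplies a self-contained derivation in the spirit of Example~\ref{eg:charpoly}. Your part~(1) is the straightforward fixed-point count of the permutation action on the monomial basis (multisets of size $k$), with the grouping by cycle length correctly producing $\prod_i\mch{X_i}{a_i}$ for each $\alpha=1^{a_1}2^{a_2}\dotsb\vdash k$. Your part~(2) handles the only delicate point properly: a basis vector $e_{i_1}\wedge\dotsb\wedge e_{i_l}$ contributes exactly when its index set $S$ is a union of cycles of $\sigma$, and then contributes the sign of $\sigma|_S$, which factors over the cycles as $\prod_i\bigl((-1)^{i-1}\bigr)^{a_i}=(-1)^{|\alpha|-l(\alpha)}$, matching the coefficient of $\binom X\alpha$; your remark that some $\binom{X_i(\sigma)}{a_i}$ vanishes whenever $n<l$ (since $a_i\le X_i(\sigma)$ for all $i$ would force $l\le n$) correctly squares the formula with $\bigwedge^l(\CC^n)=0$. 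Two small observations: the statement's summation index ``$\alpha\vdash k$'' in part~(2) is a typo for $\alpha\vdash l$, which you have silently corrected, and your argument in fact shows the character identities hold for all $n\ge 0$, i.e.\ the sequences are polynomial with $N=0$, which is stronger than the eventual polynomiality asserted.
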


\subsection{Multiplicity of sign representation when $\lambda$ has two columns}
\label{subsec:2col}

Let $(k,l)'$ be the conjugate of the partition $(k,l)$, where  $0 < l \leq k \leq n$. The character polynomial associated to $(W_{(k,l)'}(\CC^n))$ is defined using the dual Jacobi-Trudi identity (see \cite[Corollary 7.16.2]{MR1676282})
\begin{align*}
S_{(k,l)'}= E_kE_l- E_{k+1}E_{l-1}.
\end{align*}

The signed moment of $S_{(k,l)'}$ is thus the difference of signed moments, for which we have the following generating functions.
\begin{theorem}
\label{th:gen2col}
For integers $k,l\geq 0$, the signed moment of $E_kE_l$ is
\begin{displaymath}
\sum_{k,l,n}\{E_{k}E_{l}\}_nu^kv^lz^n=\frac{(1+z)(1+uvz)}{(1-uz)(1-vz)}. 
\end{displaymath}
\end{theorem}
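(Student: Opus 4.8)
The plan is to sidestep expanding products in the binomial basis and instead obtain the full three-variable generating function at once from the cycle-index (exponential) formula for the symmetric groups. The first move is to package the $E_l$'s into a single series. Starting from $E_l=\sum_{\alpha\vdash l}(-1)^{|\alpha|-\ell(\alpha)}\binom X\alpha$ of Theorem~\ref{th:charHE} and writing $\alpha=1^{a_1}2^{a_2}\cdots$, one has $(-1)^{|\alpha|-\ell(\alpha)}=\prod_i\bigl((-1)^{i-1}\bigr)^{a_i}$ and $\binom X\alpha=\prod_i\binom{X_i}{a_i}$; grouping the monomials of $\sum_{l\ge0}E_l u^l$ by the tuple $(a_1,a_2,\dots)$ and applying $\sum_{a\ge0}\binom{X_i}{a}t^a=(1+t)^{X_i}$ gives the polynomial identity
\[
  \sum_{l\ge0}E_l\,u^l=\prod_{i\ge1}\bigl(1+(-1)^{i-1}u^i\bigr)^{X_i},
\]
valid in $\CC[X_1,X_2,\dots][[u]]$ (under $X_i\mapsto b_i$ this is just $\det(I+u\sigma)$ for a permutation with $b_i$ cycles of length $i$). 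Repeating with $v$ and multiplying, $\sum_{k,l}E_kE_l\,u^kv^l=\prod_i\bigl[(1+(-1)^{i-1}u^i)(1+(-1)^{i-1}v^i)\bigr]^{X_i}$.

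Next I would substitute this into $\{E_kE_l\}_n=\frac1{n!}\sum_{\sigma\in S_n}\sgn(\sigma)(E_kE_l)(\sigma)$. For $\sigma$ with $b_i$ cycles of length $i$ one has $\sgn(\sigma)=\prod_i((-1)^{i-1})^{b_i}$, so evaluating the product above at $X_i=b_i(\sigma)$ and multiplying by $\sgn(\sigma)$ gives $\sum_{k,l}\{E_kE_l\}_n u^kv^l=\frac1{n!}\sum_{\sigma\in S_n}\prod_i c_i^{\,b_i(\sigma)}$ with cycle weight $c_i=(-1)^{i-1}(1+(-1)^{i-1}u^i)(1+(-1)^{i-1}v^i)$. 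Because this is multiplicative over cycles, the cycle index $\sum_n\frac{z^n}{n!}\sum_{\sigma\in S_n}\prod_i c_i^{b_i(\sigma)}=\exp\bigl(\sum_{i\ge1}c_iz^i/i\bigr)$ (see \cite{MR1676282}) yields
\[
  \sum_{k,l,n}\{E_kE_l\}_n u^kv^lz^n=\exp\Bigl(\sum_{i\ge1}\frac{(-1)^{i-1}z^i}{i}\bigl(1+(-1)^{i-1}u^i\bigr)\bigl(1+(-1)^{i-1}v^i\bigr)\Bigr).
\]
Finally, expanding the product and using $(-1)^{i-1}(-1)^{i-1}=1$ collapses the $i$-th bracket to $(-1)^{i-1}+u^i+v^i+(-1)^{i-1}u^iv^i$, so the exponent is $\log(1+z)-\log(1-uz)-\log(1-vz)+\log(1+uvz)$; exponentiating gives the claimed $\dfrac{(1+z)(1+uvz)}{(1-uz)(1-vz)}$.

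Everything lives in $\CC[[u,v,z]]$, so there is no convergence to worry about, and there is no essential obstacle — the argument is bookkeeping. The two places that call for care are: that the first displayed identity really is an identity of polynomials in the $X_i$ (which the rearrangement of the defining sum delivers directly, rather than merely an identity of character values for $n$ large), and keeping the signs $(-1)^{i-1}$ straight when collapsing the bracket in the last step. One could alternatively reexpand each $\binom X\alpha\binom X\beta$ in the binomial basis via a Vandermonde-type identity and then sum Theorem~\ref{th:binom} over all the resulting terms, but that is considerably more cumbersome than the route above.
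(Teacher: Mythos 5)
Your proof is correct and follows essentially the same route as the paper: both reduce the signed moment to the cycle-index exponential $\exp\bigl(\sum_{i\ge1}\tfrac{(-1)^{i-1}z^i}{i}(1-(-u)^i)(1-(-v)^i)\bigr)$ and then factor the exponent into the four logarithms $\log(1+z)-\log(1-uz)-\log(1-vz)+\log(1+uvz)$. Your intermediate identity $\sum_{l\ge0}E_l u^l=\prod_{i\ge1}(1+(-1)^{i-1}u^i)^{X_i}$ is simply a cleaner packaging of the sums over $b_i$ and $c_i$ that the paper carries out inside its product over $i$, so the two arguments differ only in bookkeeping.
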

\begin{proof}
By Theorem~\ref{th:charHE}, the signed moment $\{E_kE_l\}_n$ is
\begin{displaymath}
\{E_kE_l\}_n= \sum_{\alpha \vdash n}\frac{1}{z_\alpha} (-1)^{|\alpha|-l(\alpha)}\sum_{\beta \vdash k, \gamma \vdash l} (-1)^{|\beta|-l(\beta)}(-1)^{|\gamma|-l(\gamma)}{\alpha \choose \beta}{\alpha \choose \gamma}.
\end{displaymath}
Which is the coefficient of $[u^kv^lz^n]$ in
\begin{align*}
\prod_{i\geq 1}  \sum_{a_i,b_i,c_i \geq 0}\frac{1}{i^{a_i}a_i!}(-1)^{(i-1)a_i+(i-1)b_i+(i-1)c_i}{a_i \choose b_i}{a_i \choose c_i}u^{ib_i}v^{ic_i}z^{ia_i}.
\end{align*}
The inner sum is $\expn(\frac{-(-z)^i(1-(-u)^i)(1-(-v)^i)}{i})$. Thus
\begin{align*}
\{E_kE_l\}_n&=[u^kv^lz^n]\prod_{i \geq 1}\expn(\frac{-(-z)^i(1-(-u)^i)(1-(-v)^i)}{i})\\
&=[u^kv^lz^n]\expn(\sum_{i \geq 1}\frac{(uz)^i+(vz)^i}{i})\expn(\sum_{i \geq 1}\frac{-(-z)^i-(-uvz)^i}{i})\\
&=[u^kv^lz^n]\frac{(1+z)(1+uvz)}{(1-uz)(1-vz)},
\end{align*}
as required.
\end{proof}
The following corollary of Theorem \ref{th:gen2col} expresses the signed moment of $S_{(k,l)'}$ as the difference of the cardinality of two sets. 
\begin{corollary}
\label{cor:qstar}
  Let $q^*_n(k,l)$ denote the number of partitions of $(k,l)$ into $n$ parts, where each part is in $\{(0,0), (1,0), (0,1),(1,1)\}$, and where $(0,0)$ and $(1,1)$ occur at most once, while $(1,0)$ and $(0,1)$ may occur multiple times.
  Then the signed moment of $S_{(k,l)'}$ for $k\geq l$ is given by
  \begin{displaymath}
    \smo{S_{(k,l)'}}_n = \begin{cases}
      q^*_n(k,l)-q^*_n(k+1,l-1) & \text{if } l>0,\\
      q^*_n(k,l) & \text{otherwise}.
    \end{cases}
  \end{displaymath}
\end{corollary}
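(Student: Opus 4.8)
The plan is to recognize the rational function appearing in Theorem~\ref{th:gen2col} as the generating function of the numbers $q^*_n(k,l)$, after which the corollary follows at once from the linearity of the signed moment together with the dual Jacobi--Trudi identity.

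First I would compute $\sum_{k,l,n} q^*_n(k,l)\, u^k v^l z^n$ directly from the definition of $q^*_n(k,l)$. A partition of $(k,l)$ into $n$ parts of the prescribed kind is specified by recording how many times each of the four allowed parts is used, subject to the total multiplicity being $n$, the first coordinates summing to $k$, and the second coordinates summing to $l$. Marking the number of parts by $z$, the first-coordinate sum by $u$, and the second-coordinate sum by $v$, the parts $(1,0)$ and $(0,1)$ may be used arbitrarily often and so contribute factors $(1-uz)^{-1}$ and $(1-vz)^{-1}$, while $(0,0)$ and $(1,1)$ may be used at most once and so contribute the polynomial factors $(1+z)$ and $(1+uvz)$. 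Hence
\begin{displaymath}
\sum_{k,l,n} q^*_n(k,l)\, u^k v^l z^n = \frac{(1+z)(1+uvz)}{(1-uz)(1-vz)},
\end{displaymath}
which by Theorem~\ref{th:gen2col} is exactly $\sum_{k,l,n} \{E_kE_l\}_n\, u^k v^l z^n$. Comparing coefficients gives $\{E_kE_l\}_n = q^*_n(k,l)$ for all $k,l,n\geq 0$.

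It then remains to pass from $E_kE_l$ to $S_{(k,l)'}$. For $l>0$, the dual Jacobi--Trudi identity $S_{(k,l)'} = E_kE_l - E_{k+1}E_{l-1}$ and linearity of $p\mapsto\smo{p}_n$ yield $\smo{S_{(k,l)'}}_n = \{E_kE_l\}_n - \{E_{k+1}E_{l-1}\}_n = q^*_n(k,l) - q^*_n(k+1,l-1)$. For $l=0$ one has $(k,0)'=(1^k)$, so $W_{(k,0)'}(\CC^n) = \bigwedge^k(\CC^n)$ has character polynomial $E_k = E_kE_0$, and therefore $\smo{S_{(k,0)'}}_n = \{E_kE_0\}_n = q^*_n(k,0)$. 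There is no real obstacle here: the only point needing a little care is the bookkeeping in the first display --- in particular checking that the ``at most once'' restrictions on $(0,0)$ and $(1,1)$ are precisely what replace two geometric series by the factors $(1+z)$ and $(1+uvz)$ --- together with treating the degenerate case $l=0$, where $S_{(k,l)'}$ is a single $E_k$ rather than a genuine difference.
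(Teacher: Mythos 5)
Your proposal is correct and follows exactly the route the paper intends: the paper states this as an immediate corollary of Theorem~\ref{th:gen2col}, reading off $\{E_kE_l\}_n=q^*_n(k,l)$ from the product form of the generating function and then applying the dual Jacobi--Trudi identity with linearity of the signed moment. Your write-up simply makes explicit the coefficient bookkeeping and the degenerate case $l=0$, both of which are handled correctly.
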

\begin{lemma}
\label{lem:qval}
  For integers $k\geq l$ we have
  \begin{displaymath}
    q^*_n(k,l) =
    \begin{cases}
      1 & \text{if } l>0\text{ and } k+l\in \{n-1,n+1\},\\
      2 & \text{if } l>0\text{ and } k+l=n,\\
      1 & \text{if } l=0\text{ and } k\in \{n-1,n\},\\
      0 & \text{otherwise}.
    \end{cases}
	  \end{displaymath}
\end{lemma}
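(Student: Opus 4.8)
The plan is to extract $q^*_n(k,l)$ directly from the generating function supplied by Theorem~\ref{th:gen2col}, reinterpreted combinatorially as in Corollary~\ref{cor:qstar}. By definition $q^*_n(k,l)$ counts sequences of $n$ parts drawn from $\{(0,0),(1,0),(0,1),(1,1)\}$ summing to $(k,l)$, with $(0,0)$ and $(1,1)$ each used at most once. Equivalently, $q^*_n(k,l)$ is the coefficient $[u^kv^lz^n]$ of the product
\begin{displaymath}
(1+z)(1+uvz)\cdot\frac{1}{1-uz}\cdot\frac{1}{1-vz},
\end{displaymath}
which is exactly the right-hand side of Theorem~\ref{th:gen2col}. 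So the whole lemma reduces to a coefficient extraction.

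First I would expand: writing $\frac{1}{(1-uz)(1-vz)}=\sum_{p,q\geq 0}u^pv^qz^{p+q}$, the four terms coming from $(1+z)(1+uvz)=1+z+uvz+uvz^2$ contribute, respectively, to the coefficient of $u^kv^lz^n$: a term $1$ when $(p,q)=(k,l)$ and $p+q=n$, i.e. when $k+l=n$; a term $1$ when $(p,q)=(k,l)$ and $p+q=n-1$, i.e. $k+l=n-1$; a term $1$ when $(p,q)=(k-1,l-1)$ and $p+q+1=n$, i.e. $k+l=n+1$ (this requires $k,l\geq 1$); and a term $1$ when $(p,q)=(k-1,l-1)$ and $p+q+2=n$, i.e. $k+l=n$ (again requiring $k,l\geq 1$). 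Summing these indicator contributions gives $q^*_n(k,l)$ as stated: when $l>0$, the value is $2$ if $k+l=n$ (the first and fourth contributions), $1$ if $k+l\in\{n-1,n+1\}$, and $0$ otherwise; when $l=0$ only the first two contributions survive (the fourth needs $l\geq 1$, and if additionally $k=0$ then the second needs $n=-1$), giving $1$ if $k\in\{n-1,n\}$ and $0$ otherwise. A small edge case $k=l=0$ is automatically covered since then $k+l=0\in\{n-1,n\}$ only for $n\in\{0,1\}$, matching $q^*_0=q^*_1=1$.

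There is essentially no main obstacle here; the only point requiring a moment's care is bookkeeping the parity/positivity constraints so that the $(k-1,l-1)$ shifts are only counted when $k,l\geq 1$, and separating the $l=0$ case cleanly from $l>0$. Alternatively, one could prove the lemma by a direct combinatorial argument on the definition of $q^*_n(k,l)$: a valid configuration uses $\epsilon_0\in\{0,1\}$ copies of $(0,0)$, $\epsilon_1\in\{0,1\}$ copies of $(1,1)$, then $a$ copies of $(1,0)$ and $b$ copies of $(0,1)$, subject to $a+\epsilon_1=k$, $b+\epsilon_1=l$, $a+b+\epsilon_0+\epsilon_1=n$; eliminating $a,b$ gives $\epsilon_0+2\epsilon_1=n-(k+l)+2\epsilon_1-2\epsilon_1$... more directly, $a+b=k+l-2\epsilon_1$ and $n=k+l-2\epsilon_1+\epsilon_0+\epsilon_1=k+l-\epsilon_1+\epsilon_0$, so $\epsilon_1-\epsilon_0=k+l-n$, and one counts the pairs $(\epsilon_0,\epsilon_1)\in\{0,1\}^2$ with that difference, subject to $\epsilon_1\leq l$ (and $\epsilon_1\le k$, automatic since $k\ge l$). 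When $l>0$ both values of $\epsilon_1$ are allowed: $k+l-n=0$ has two solutions $(0,0),(1,1)$; $k+l-n=-1$ has one, $(1,0)$; $k+l-n=1$ has one, $(0,1)$. When $l=0$ we must have $\epsilon_1=0$, leaving $-\epsilon_0=k-n$, so $k-n\in\{0,-1\}$ each give one solution. Either route yields the stated formula; I would present whichever is shorter, likely the generating-function extraction since the generating function is already in hand.
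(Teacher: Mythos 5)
Your proof is correct, but it takes a (mildly) different route from the paper. The paper proves Lemma~\ref{lem:qval} by brute-force enumeration: in each of the five nonzero cases it simply writes down the vector partitions of $(k,l)$ explicitly (e.g.\ for $l>0$, $k+l=n$ the two partitions $(0,0)+(1,1)+(k-1)(1,0)+(l-1)(0,1)$ and $k(1,0)+l(0,1)$), leaving the ``otherwise $0$'' case implicit. Your primary argument instead identifies $q^*_n(k,l)$ with the coefficient $[u^kv^lz^n]$ of the product $(1+z)(1+uvz)/((1-uz)(1-vz))$ from Theorem~\ref{th:gen2col} and reads off the four indicator contributions; this is a clean calculation, it handles the vanishing cases and the $l=0$, $k=l=0$ edge cases automatically, and it makes explicit the link between Theorem~\ref{th:gen2col} and Corollary~\ref{cor:qstar} that the paper leaves tacit. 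Your alternative argument, parametrizing configurations by $\epsilon_0,\epsilon_1\in\{0,1\}$ with $\epsilon_1-\epsilon_0=k+l-n$ and $\epsilon_1\leq l$, is essentially the paper's enumeration made systematic (and again covers the zero cases explicitly). The only blemish is the garbled intermediate line ``eliminating $a,b$ gives $\epsilon_0+2\epsilon_1=\dotsb$'' in the second route, which you immediately correct to the right relation $n=k+l-\epsilon_1+\epsilon_0$; tidy that up if you keep that version, but either route as presented yields the lemma.
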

\begin{proof}
  If $l>0$, and $k+l=n-1$, then the only partition is
  \begin{displaymath}
    (k,l) = (0,0) +k (1,0) + l (0,1) 
  \end{displaymath}
  If $l>0$ and $k+l=n$, then there are two partitions
  \begin{align*}
    (k,l) & = (0,0) + (1,1) + (k-1)(1,0) + (l-1)(0,1)\\
          & = k(1,0) + l(0,1).
  \end{align*}
  If $l>0$ and $k+l=n+1$, then the only partition is
  \begin{displaymath}
    (k,l) = (1,1) + (k-1)(1,0) + (l-1)(0,1).
  \end{displaymath}
  If $l=0$, and $k=n-1$ then the only partition is
  \begin{displaymath}
    (k,0) = (0,0)+k(1,0).
  \end{displaymath}
  If $l=0$ and $k=n$, then the only partition is
  \begin{displaymath}
    (k,0) = k(1,0),
  \end{displaymath}
  completing the proof.
\end{proof}
\begin{theorem}
\label{th:sign_2row}
  For each $n\geq 2$, the sign representation of $S_n$ occurs in $W_{(k,l)'}(\CC^n)$ if and only if $(k,l)\in \{(n-1,0),(n,0),(n-1,1),(n,1)\}$.
  In all these cases it occurs with multiplicity one.
\end{theorem}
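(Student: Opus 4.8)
The plan is to read the multiplicity off the combinatorial data already assembled. By the remark following Definition~\ref{def:mom}, the multiplicity of the sign representation of $S_n$ in $W_{(k,l)'}(\CC^n)$ equals the signed moment $\smo{S_{(k,l)'}}_n$, and Corollary~\ref{cor:qstar} rewrites this as $q^*_n(k,l)-q^*_n(k+1,l-1)$ when $l>0$ and as $q^*_n(k,l)$ when $l=0$. Since Lemma~\ref{lem:qval} evaluates $q^*_n(a,b)$ for all $a\ge b$, what remains is a finite case analysis, organized by the value of $k+l$ (noting $k+l=(k+1)+(l-1)$, so both terms in the difference are controlled by this single quantity) and, when $l>0$, by whether $l=1$ or $l\ge 2$.

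First I would handle $l=0$: the multiplicity is $q^*_n(k,0)$, which by Lemma~\ref{lem:qval} is $1$ for $k\in\{n-1,n\}$ and $0$ otherwise, giving exactly the pairs $(n-1,0)$ and $(n,0)$, each with multiplicity one.

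Next, for $l\ge 1$ I would split on $k+l$. If $k+l\notin\{n-1,n,n+1\}$ then both $q^*_n(k,l)$ and $q^*_n(k+1,l-1)$ vanish, so the multiplicity is $0$. If $k+l=n-1$ then $q^*_n(k,l)=1$ and $q^*_n(k+1,l-1)=1$ (when $l=1$ this uses $k+1=n-1$; when $l\ge2$ it is the $l>0$ branch of the lemma), so the multiplicity is $0$. If $k+l=n$ then $q^*_n(k,l)=2$: for $l\ge 2$ one has $q^*_n(k+1,l-1)=2$ and the multiplicity is $0$, while for $l=1$ one has $q^*_n(k+1,0)=1$ (since $k+1=n$) and the multiplicity is $1$, occurring exactly for $(k,l)=(n-1,1)$. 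If $k+l=n+1$ then $q^*_n(k,l)=1$: for $l\ge 2$ one has $q^*_n(k+1,l-1)=1$ and the multiplicity is $0$, while for $l=1$ one has $q^*_n(k+1,0)=0$ (since $k+1=n+1\notin\{n-1,n\}$) and the multiplicity is $1$, occurring exactly for $(k,l)=(n,1)$. The hypothesis $n\ge 2$ ensures that $(n-1,1)$ indeed satisfies $k\ge l$ and that the four exceptional pairs are distinct.

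Putting the cases together, the multiplicity is nonzero precisely when $(k,l)\in\{(n-1,0),(n,0),(n-1,1),(n,1)\}$, and in each of these cases it equals one, as claimed. The step most in need of care is the boundary $l=1$, where evaluating $q^*_n(k+1,l-1)=q^*_n(k+1,0)$ requires switching from the $l>0$ branch of Lemma~\ref{lem:qval} to its $l=0$ branch; this is the only place a careless reading of the lemma could cause an error, but the verification there is purely mechanical.
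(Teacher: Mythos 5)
Your proof is correct and takes essentially the same route as the paper: both reduce the multiplicity to $q^*_n(k,l)-q^*_n(k+1,l-1)$ via Corollary~\ref{cor:qstar} and then evaluate using Lemma~\ref{lem:qval}, with the $l=0$ and $l=1$ boundary cases handled exactly as in the paper. The only cosmetic difference is that for $l\geq 2$ the paper cancels the two terms by a bijection replacing a $(1,0)$ part with $(0,1)$, whereas you read both values off Lemma~\ref{lem:qval} case by case according to $k+l$; the conclusions agree.
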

\begin{proof}
  From Corollary~\ref{cor:qstar}, the multiplicity of the sign representation in $W_{(k,l)'}(\CC^n)$ is $q^*_n(k,l)-q^*_n(k+1,l-1)$.
  
  If $l=0$, he second term is $0$. The first term, $q^*_n(k,0)$ is $1$ if $k\in \{n-1,n\}$ and $0$ otherwise, since $(0,0)$ may occur at most once, and the other parts must all be $(1,0)$.

For $l\geq 2$, each vector partition of $(k+1,l-1)$ that contributes to $q^*_n(k+1,l-1)$ must contain at least one $(1,0)$ part. The map replacing one $(1,0)$ by $(0,1)$ is a bijection from partitions of $(k+1,l-1)$ and $(k,l)$. Thus in this case the multiplicity is zero.

When $l=1$, both $q^*_n(k,l)$ and $q^*_n(k+1,l-1)$ are zero unless either $k=n$ or $k=n-1$. When $k=n-1$, we have  $q^*_n(k,l)=2$ and $q^*_n(k+1,l-1)=1$ from Lemma \ref{lem:qval}. When $k=n$, Lemma \ref{lem:qval} again verifies that the multiplicity of the sign representation is $1$.
\end{proof}
By Littlewood's formula (Equation \eqref{al:identity}) we have the following corollary
\begin{corollary}
\label{cor:pleth_twocol}
For a partition $\lambda=(k,l)'$, the coefficient of $s_{\lambda}$ in the expansion of the plethysm $s_{(1^n)}[1+h_1+h_2+\dotsb]$ is
\begin{align*}
r_{\lambda \mu}= 
\begin{cases}
1 & (k,l)\in \{(n-1,0),(n,0),(n-1,1),(n,1)\},\\
0 & \text{otherwise.}
\end{cases}
\end{align*}
\end{corollary}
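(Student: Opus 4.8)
The plan is to deduce the statement directly from Littlewood's identity \eqref{al:identity} and Theorem~\ref{th:sign_2row}, with no further combinatorial work. First I would specialize \eqref{al:identity} to the case $\mu=(1^n)$: the coefficient of $s_\lambda$ in $s_{(1^n)}[1+h_1+h_2+\dotsb]$ is then, by definition, the restriction coefficient $r_{\lambda,(1^n)}$, i.e.\ the multiplicity of the Specht module $V_{(1^n)}$ in $\text{Res}^{GL_n(\CC)}_{S_n}W_\lambda(\CC^n)$. I would then recall that $V_{(1^n)}$ is precisely the sign representation of $S_n$, so that this multiplicity is exactly the quantity computed in Theorem~\ref{th:sign_2row}.

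Writing $\lambda=(k,l)'$ with $0<l\le k\le n$ — noting in passing that $(k,l)'$ has $k\le n$ parts, hence legitimately indexes an irreducible polynomial representation of $GL_n(\CC)$ — Theorem~\ref{th:sign_2row} gives that this multiplicity equals $1$ when $(k,l)\in\{(n-1,0),(n,0),(n-1,1),(n,1)\}$ and $0$ otherwise, which is the asserted value of $r_{\lambda,(1^n)}$. This is all that is needed; there is essentially no obstacle here, since the combinatorial content has already been extracted in Corollary~\ref{cor:qstar}, Lemma~\ref{lem:qval}, and Theorem~\ref{th:sign_2row}. The only small points to be careful about are the identification of $V_{(1^n)}$ with the sign character and the implicit convention that the $\mu$ appearing in $r_{\lambda\mu}$ is taken to be $(1^n)$.
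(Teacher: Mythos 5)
Your proposal is correct and matches the paper's own (implicit) argument: the corollary is stated as an immediate consequence of Littlewood's identity \eqref{al:identity} together with Theorem~\ref{th:sign_2row}, exactly via the identification of the coefficient of $s_\lambda$ with $r_{\lambda,(1^n)}$ and of $V_{(1^n)}$ with the sign representation. Nothing further is needed.
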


\subsection{Multiplicity of the sign representation in the restriction of hooks}
\label{subsec:hook}
For all $a,b\geq 0$, let $\lambda = (a|b)$ denote the hook partition $(a+1,1^b)$.
We now consider the multiplicity of the sign representation of $S_n$ in $W_\lambda(\CC^n)$ for $n\geq b+1$. The Pieri identity (see \cite[Theorem 7.15.7]{MR1676282}) expresses the Schur function $s_{(a\mid b)}$ as the alternating sum
\begin{displaymath}
s_{(a\mid b)}= \sum_{j=0}^{b}(-1)^j h_{a+1+j}e_{b-j}.
\end{displaymath}
This relation also holds for the corresponding character polynomials, so that
\begin{align}
\label{eq:hookalt}
S_{(a\mid b)}=\sum_{j=0}^{b}(-1)^j H_{a+1+j}E_{b-j}.
\end{align}
Thus the signed moment of $S_{(a|b)}$ is an alternating sum of signed moments, for which we have the following generating function.
\begin{theorem}
\label{th:gen_hook}
For $k,l\geq 0$, the signed moment of $H_kE_l$ is 
\begin{equation}
  \label{eq:signed-moment-HE}
\sum_{k,l,n}\{H_kE_l\}_nu^kv^lz^n= \frac{\prod_{j\geq 0} (1+u^jz)}{\prod_{j\geq 0} (1-u^jvz)}.
\end{equation}
\end{theorem}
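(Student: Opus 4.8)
The plan is to adapt, almost verbatim, the argument used to prove Theorem~\ref{th:gen2col}. First I would gather the permutations of $S_n$ according to cycle type and substitute the character polynomials $H_k$ and $E_l$ of Theorem~\ref{th:charHE}, using $\sgn(\alpha)=\prod_i(-1)^{(i-1)a_i}$, $(-1)^{|\gamma|-l(\gamma)}=\prod_i(-1)^{(i-1)c_i}$, and $z_\alpha=\prod_i i^{a_i}a_i!$. This expresses
\[
\smo{H_kE_l}_n=\sum_{\substack{\alpha\vdash n\\ \beta\vdash k,\ \gamma\vdash l}}\frac{1}{z_\alpha}\prod_i(-1)^{(i-1)a_i+(i-1)c_i}\,\mch{a_i}{b_i}\binom{a_i}{c_i},
\]
where $\alpha=1^{a_1}2^{a_2}\cdots$, $\beta=1^{b_1}2^{b_2}\cdots$, $\gamma=1^{c_1}2^{c_2}\cdots$, and $\mch{a_i}{b_i}=\binom{a_i+b_i-1}{b_i}$. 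The point to watch is the asymmetry between the two factors: the contribution of $H_k$ is the multichoose $\mch{a_i}{b_i}$ and carries no sign, whereas the contribution of $E_l$ is the ordinary binomial $\binom{a_i}{c_i}$ weighted by $(-1)^{(i-1)c_i}$.

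Next I would read off $\sum_{k,l,n}\smo{H_kE_l}_nu^kv^lz^n$ as an infinite product $\prod_{i\geq1}F_i$ with local factor
\[
F_i=\sum_{a,b,c\geq0}\frac{(-1)^{(i-1)a+(i-1)c}}{i^{a}a!}\binom{a+b-1}{b}\binom{a}{c}u^{ib}v^{ic}z^{ia}.
\]
To evaluate $F_i$ I would do the $b$-sum with the negative binomial identity $\sum_{b\geq0}\binom{a+b-1}{b}w^b=(1-w)^{-a}$ (the case $a=0$ degenerating to $1$), the $c$-sum with $\sum_{c\geq0}\binom{a}{c}y^c=(1+y)^a$, and then the $a$-sum, using $1+(-1)^{i-1}v^i=1-(-v)^i$ and $(-1)^{i-1}z^i=-(-z)^i$, to collapse everything into
\[
F_i=\expn\!\left(\frac{-(-z)^i\bigl(1-(-v)^i\bigr)}{i\,(1-u^i)}\right).
\]

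Finally, from $\prod_{i\geq1}F_i=\expn\bigl(\sum_{i\geq1}\tfrac{-(-z)^i(1-(-v)^i)}{i(1-u^i)}\bigr)$ I would expand $(1-u^i)^{-1}=\sum_{j\geq0}u^{ij}$, interchange the sums over $i$ and $j$, and apply $\sum_{i\geq1}x^i/i=-\log(1-x)$ to $x=-u^jz$ and to $x=u^jvz$; this rewrites the $j$-th term of the exponent as $\log\frac{1+u^jz}{1-u^jvz}$, and exponentiating gives the asserted $\prod_{j\geq0}(1+u^jz)\big/\prod_{j\geq0}(1-u^jvz)$. I expect the main obstacle to be purely bookkeeping: keeping the three independent sign streams straight and confirming that the $u$-variable comes out sign-free — this is exactly what turns the factor $1-(-u)^i$ appearing in the proof of Theorem~\ref{th:gen2col} into $1-u^i$ here, and is what produces the products over all $j\geq0$ (rather than a single factor) in the final formula.
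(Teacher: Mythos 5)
Your proposal is correct and follows essentially the same route as the paper's (much terser) proof: group permutations by cycle type, factor the generating function into local factors over $i$, evaluate the $b$-, $c$-, and $a$-sums to get the exponent $\frac{-(-z)^i(1-(-v)^i)}{i(1-u^i)}$, and expand the geometric series in $u^i$ to produce the products over $j$. Your write-up also supplies the $1/i$ in the exponent and the correct logarithm identity, both of which are garbled by typos in the paper's version.
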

\begin{proof}
From Theorem \ref{th:charHE} and the definition of the signed moment,
\begin{displaymath}
\{H_kE_l\}_n= \sum_{\alpha \vdash n}\frac{1}{z_\alpha} (-1)^{|\alpha|-l(\alpha)}\sum_{\beta \vdash k, \gamma \vdash l} (-1)^{|\gamma|-l(\gamma)}\mch{\alpha}{\beta}{\alpha \choose \gamma}.
\end{displaymath}
This is the coefficient of $u^kv^lz^n$ in $\prod_{i\geq 1}\text{exp}\left(\frac{ -(-z)^i(1-(-v)^i}{1-u^i} \right)$. Recalling that $\text{log} x= \sum_{i\geq 1}\frac{x^i}{i}$ gives us the desired expression. 
\end{proof}
A combinatorial interpretation of the signed moment $\smo{H_{a+1+j}E_{b-j}}_n$ of each term in the alternating sum is given by the following proposition
\begin{prop}
\label{prop:HkEl}
For all $a,b \geq 0$ and $j\leq b$, $\smo{H_{a+1+j}E_{b-j}}_n$ is the number of pairs $(\lambda,\mu)$ such that
 \begin{enumerate}
  \item $\lambda=(\lambda_1,\dotsc,\lambda_{b-j})$, where $\lambda_1\geq \dotsb\geq \lambda_{b-j}\geq 0$,
  \item $\mu=(\mu_1,\dotsc,\mu_{n-b+j})$, with $\mu_1>\dotsb>\mu_{n-b+j}\geq 0$,
  \item $\mid \lambda\mid +\mid\mu\mid=a+1+j$.
  \end{enumerate}
\end{prop}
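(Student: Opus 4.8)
The plan is to read the claimed interpretation off directly from the product formula of Theorem~\ref{th:gen_hook}, which says
\[
\{H_kE_l\}_n=[u^kv^lz^n]\,\frac{\prod_{i\geq 0}(1+u^iz)}{\prod_{i\geq 0}(1-u^ivz)}.
\]
So it suffices to expand the right-hand side as a sum over pairs of sequences of nonnegative integers and extract the coefficient of $u^{a+1+j}v^{b-j}z^n$.

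First I would expand the numerator: choosing, for each $i\geq 0$, either $1$ or $u^iz$ from the factor $(1+u^iz)$ and recording the finite set $S\subseteq\mathbf{Z}_{\geq 0}$ of indices for which $u^iz$ was chosen gives $\prod_{i\geq 0}(1+u^iz)=\sum_{S}u^{\sum_{i\in S}i}z^{|S|}$. Listing the elements of $S$ in decreasing order $\mu_1>\dots>\mu_r\geq 0$ identifies $S$ with a strictly decreasing finite sequence $\mu$ of nonnegative integers (a final part equal to $0$ allowed, corresponding to $0\in S$), and this term is $u^{|\mu|}z^{\ell(\mu)}$, where $\ell(\mu)=r$ counts all the parts. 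Similarly I would expand each factor of the denominator geometrically: choosing $(u^ivz)^{m_i}$ with $m_i\geq 0$ of finite support gives $\prod_{i\geq 0}\frac{1}{1-u^ivz}=\sum_{(m_i)}u^{\sum_i im_i}v^{\sum_i m_i}z^{\sum_i m_i}$; reading $m_i$ as the number of parts equal to $i$ identifies $(m_i)$ with a weakly decreasing finite sequence $\lambda=(\lambda_1\geq\dots\geq\lambda_s\geq 0)$ of nonnegative integers (parts equal to $0$ allowed and counted), and this term is $u^{|\lambda|}v^{\ell(\lambda)}z^{\ell(\lambda)}$ with $\ell(\lambda)=s=\sum_i m_i$.

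Multiplying the two expansions gives
\[
\frac{\prod_{i\geq 0}(1+u^iz)}{\prod_{i\geq 0}(1-u^ivz)}=\sum_{(\lambda,\mu)}u^{|\lambda|+|\mu|}v^{\ell(\lambda)}z^{\ell(\lambda)+\ell(\mu)},
\]
summed over all pairs $(\lambda,\mu)$ with $\lambda$ a weakly decreasing and $\mu$ a strictly decreasing finite sequence of nonnegative integers. Extracting $[u^{a+1+j}v^{b-j}z^n]$ then forces $\ell(\lambda)=b-j$ (so $\lambda=(\lambda_1\geq\dots\geq\lambda_{b-j}\geq 0)$, which is possible since $j\leq b$), hence $\ell(\mu)=n-(b-j)=n-b+j$ (so $\mu=(\mu_1>\dots>\mu_{n-b+j}\geq 0)$), and $|\lambda|+|\mu|=a+1+j$. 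These are exactly conditions (1)--(3), so $\{H_{a+1+j}E_{b-j}\}_n$ equals the number of such pairs.

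The calculation is short; the only thing that really needs care is the bookkeeping around zero parts, i.e.\ confirming that the $i=0$ factor in each product contributes correctly, so that $\lambda$ has exactly $b-j$ parts (some possibly $0$) and $\mu$ has exactly $n-b+j$ distinct parts (possibly including $0$), in agreement with the conventions $\lambda_{b-j}\geq 0$, $\mu_{n-b+j}\geq 0$ of the statement. Once that identification is pinned down, the proposition is just a matching of exponents.
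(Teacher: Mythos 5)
Your proposal is correct and is essentially the paper's own argument: the paper likewise reads the interpretation directly off the product formula of Theorem~\ref{th:gen_hook}, with the denominator factors $\left(1-u^ivz\right)^{-1}$ supplying the weakly decreasing sequence $\lambda$ (the $v$- and $z$-exponents counting its parts) and the numerator factors $(1+u^iz)$ supplying the strictly decreasing sequence $\mu$. Your write-up just makes the bookkeeping of zero parts and exponent-matching more explicit than the paper does.
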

\begin{proof}
The terms in the denominator of the right hand side of \eqref{eq:signed-moment-HE} contribute to the sequence $\lambda$. There are $b-j$ of these terms (since terms with $v$ in them come only from the denominator and there must be $b-j$ such terms). The terms from the numerator contribute to the sequence $\mu$. Each of these terms contain a distinct power of $u$, and there are $n-b+j$ of them. 
\end{proof}
\begin{figure}
\begin{tikzpicture}
\draw[->,ultra thick] (2,0)--(2,5) node[above]{};
\node [label={[anchor=south east, inner sep=0pt]west:$0$}] at (2, 0)   (0){};
\node [label={[anchor=south east, inner sep=0pt]west:$1$}] at (2, 1)   (1){};
\node [label={[anchor=south east, inner sep=0pt]west:$2$}] at (2, 2)   (2){};
\node [label={[anchor=south east, inner sep=0pt]west:$3$}] at (2, 3)   (3){};
\node [label={[anchor=south east, inner sep=0pt]west:$4$}] at (2, 4)   (4){};
\draw[RoyalBlue, ultra thick] (2,0) -- (3,0);
\draw[red, ultra thick] (1,1) -- (2,1);
\draw[red, ultra thick] (1,3) -- (2,3);
\draw[RoyalBlue, ultra thick] (2,3) -- (3,3);
\draw[RoyalBlue, ultra thick] (2,3.1) -- (3,3.1);
\end{tikzpicture}
\caption{The sequence $\lambda=(3,3,0)$ and $\mu= (3,1)$.}
\label{fig:diag}
\end{figure}
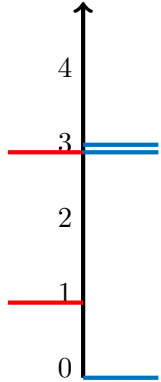

We will think of each sequence $(\lambda,\mu)$ marked on a single axis, with the parts of $\lambda$ marked in {\color{RoyalBlue}blue} and the parts of $\mu$ marked in {\color{red}red}, as in Figure~\ref{fig:diag}. 
We define a sign-reversing involution on the alternating sum in \eqref{eq:hookalt} to obtain a positive combinatorial formula for the multiplicity of the sign representation in $W_{(a\mid b)}(\CC^n)$. 
\begin{theorem}
  \label{theorem:hook-sign}
  For all $a,b\geq 0$, then $\smo{S_{(a|b)}}_n$ is the number of pairs $(\lambda,\mu)$ such that
  \begin{enumerate}
  \item $\lambda=(\lambda_1,\dotsc,\lambda_b)$, where $\lambda_1\geq \dotsb\geq \lambda_b\geq 0$,
  \item $\mu=(\mu_1,\dotsc,\mu_{n-b})$, with $\mu_1>\dotsb>\mu_{n-b}\geq 0$,
  \item $|\lambda|+|\mu|=a+1$,
  \item $\mu_1>\lambda_1$.
  \end{enumerate}
  Equivalently, 
  \begin{displaymath}
    \smo{S_{(a|b)}}_n=\sum_{\rho\in P(a,n)} \binom{r_\rho}{n-b-1},
  \end{displaymath}
  where $P(a,n)$ denotes the set of partitions of $a+n$ with $n$ non-negative parts, and for a partition $\rho\in P(a,n)$, $r_\rho$ is the number of removable cells of $\rho$ (i.e. cells whose removal from the Young diagram of $\rho$ leaves behind a valid partition) that are not in its first row.
\end{theorem}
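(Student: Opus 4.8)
The starting point is the relation \eqref{eq:hookalt}, which gives
\[
  \smo{S_{(a|b)}}_n=\sum_{j=0}^{b}(-1)^j\,\smo{H_{a+1+j}E_{b-j}}_n,
\]
so by Proposition~\ref{prop:HkEl} the quantity $\smo{S_{(a|b)}}_n$ is a signed count of pairs $(\lambda,\mu)$: such a pair, with $\lambda$ having $b-j$ (weakly decreasing, nonnegative) parts, $\mu$ having $n-b+j$ (strictly decreasing, nonnegative) parts, and $|\lambda|+|\mu|=a+1+j$, is counted with sign $(-1)^j$. Picturing it as in Figure~\ref{fig:diag}, with $b-j$ blue marks (with repetition) and $n-b+j$ distinct red marks on the nonnegative integers and total weight $a+1+j$, the plan is to build a sign-reversing involution on the disjoint union over $j$ of these configurations whose fixed points are exactly the pairs described in the theorem --- those with $j=0$ and $\mu_1>\lambda_1$ (the last condition being vacuous when $b=0$). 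Since every fixed point has $j=0$, hence positive sign, the signed count then reduces to counting fixed points, which is the first assertion.

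The involution compares the largest blue mark $\lambda_1$ (when $\lambda\neq\emptyset$) with the largest red mark $\mu_1$. If $\lambda\neq\emptyset$ and $\mu_1\le\lambda_1$, delete the largest blue mark and create a red mark at $\lambda_1+1$; as $\lambda_1+1>\mu_1$ this is a new, distinct, maximal red mark, the result is legal, and it has one fewer blue and one more red mark and weight larger by $1$, so $j$ has increased by $1$. If instead $\mu_1>\lambda_1$ (or $\lambda=\emptyset$) and $j\ge1$, do the reverse: delete the largest red mark and create a blue mark at $\mu_1-1$, where $\mu_1-1\ge\lambda_1$ (respectively $\mu_1\ge1$) guarantees legality and $j$ decreases by $1$. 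The first move applies exactly when $0\le j<b$ and $\mu_1\le\lambda_1$, the second exactly when $0<j\le b$ and ($\mu_1>\lambda_1$ or $\lambda=\emptyset$); together they move every configuration except those with $j=0$ and ($\mu_1>\lambda_1$ or $b=0$), and a short computation shows the two moves are mutually inverse.

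The remaining checks are routine but must be done with care: that each move lands in a legal configuration --- the only delicate point being that the red-to-blue move needs $\mu_1\ge1$ when $\lambda=\emptyset$, which holds because then $j=b\ge1$ and $n\ge b+1\ge2$ force $\mu_1\ge n-1\ge1$ --- and, most importantly, that no moved configuration is sent to a fixed point. This last point is the crux of the argument: a red-to-blue move can output a configuration with $j=0$, and one must verify it is never fixed. Indeed, after deleting $\mu_1$ and inserting $\mu_1-1$, the new largest red mark $\mu_2$ satisfies $\mu_2\le\mu_1-1$, which is the new largest blue mark, so the configuration falls in the blue-to-red case and is moved (by the inverse move); similarly a blue-to-red move produces a configuration with new $\mu_1=\lambda_1+1$ exceeding the new $\lambda_1$, so it too is moved. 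This establishes the first formula.

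For the second formula the plan is an explicit bijection between the pairs $(\lambda,\mu)$ of the theorem and pairs $(\rho,T)$ with $\rho\in P(a,n)$ and $T$ an $(n-b-1)$-element set of removable cells of $\rho$ not in its first row; counting the latter gives $\sum_{\rho\in P(a,n)}\binom{r_\rho}{n-b-1}$. From $(\lambda,\mu)$, form $\rho$ by adding one box to each part of $\lambda$ and to each part of $\mu$ except the largest, and sorting; then $\rho$ has $n$ parts, size $a+n$, all parts $\ge1$, and largest part $\mu_1$ (because $\mu_1>\lambda_j$ for all $j$ and $\mu_1>\mu_i$ for $i\ge2$). The shifted parts $\mu_2+1>\cdots>\mu_{n-b}+1$ are distinct and each at most $\mu_1$, so they occupy $n-b-1$ distinct steps of $\rho$; let $T$ be the removable cells (block bottoms) in those steps. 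The construction is reversed by setting $\mu_1=\rho_1$, reading the remaining parts of $\mu$ off the rows indexed by $T$ after subtracting $1$, and reading $\lambda$ off the other non-first rows after subtracting $1$ --- all of which are $\ge1$, so the subtraction is legal. I expect the main obstacle in this part to be well-definedness in the coincidence case $\mu_1=\mu_2+1$: then $\mu_2+1$ lies in the same block of $\rho$ as $\rho_1$, and one must check that assigning $\mu_1$ to its top cell and $\mu_2$ to its (removable, non-first-row) bottom cell makes the two maps inverse to each other.
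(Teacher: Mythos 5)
Correct, and essentially the paper's own argument: your sign-reversing involution (raise a largest blue part by one and recolor it red when $\mu_1\le\lambda_1$, otherwise lower the largest red part by one and recolor it blue, with fixed points exactly the $j=0$ configurations having $\mu_1>\lambda_1$) is precisely the paper's bijection $\omega$ from $\mathcal B_n(a+1+i,b-i)$ to the ``red'' part of the next term, used there to telescope the alternating sum down to $|\mathcal R_n(a+1,b)|$. Your removable-cell bijection for the second formula is also the paper's, up to normalization: the paper subtracts $1$ from the largest part of $\lambda\cup\mu$ (and then handles possible zero parts separately), whereas you add $1$ to every part other than $\mu_1$, landing directly in partitions of $a+n$ with $n$ positive parts, which matches the stated $P(a,n)$ and the paper's worked examples more cleanly.
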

\begin{proof}
  Let $\cc nab$ denote the set of pairs $(\lambda,\mu)$, where
  \begin{enumerate}
  \item $\lambda=(\lambda_1,\dotsc,\lambda_b)$, where $\lambda_1\geq \dotsb\geq \lambda_b\geq 0$,
  \item $\mu=(\mu_1,\dotsc,\mu_{n-b})$, where $\mu_1>\dotsb>\mu_{n-b}\geq 0$,
  \item $|\lambda|+|\mu|=a$.
  \end{enumerate}
  Then evidently, $|\cc nab|$ is the coefficient of $u^av^bz^n$ in $\frac{\prod_{j \geq 0} (1+u^jz)}{\prod_{j \geq 0} (1-u^jvz)}$, and hence, by Theorem ~\ref{theorem:hook-sign} $\smo{H_aE_b}_n$.
  Note that
  \begin{equation}
    \label{eq:smo-hook}
    \smo{S_{(a|b)}}_n = \sum_{i=0}^b (-1)^i\smo{H_{a+1+i}E_{b-i}}_n = \sum_{i=0}^b (-1)^i |\cc {n}{a+1+i}{b-i}|.
  \end{equation}
  
  Partition $\cc nab$ into two parts: $\gr nab$, which consists of pairs $(\lambda,\mu)\in \cc nab$ such that the largest among the parts of $\lambda$ and $\mu$ occurs in $\lambda$, and $\rr nab$, which consists of pairs $(\lambda,\mu)\in \cc nab$ such that the largest among the parts of $\lambda$ and $\mu$ occurs only in $\mu$.
  \begin{displaymath}
    \cc nab = \gr nab \sqcup \rr nab.
  \end{displaymath}
  Define a bijection
  \begin{displaymath}
    \omega: \gr nab \to \rr n{a+1}{b-1}
  \end{displaymath}
  by setting $\omega(\lambda,\mu)$ to be the pair obtained by moving a largest part of $\lambda$ to a $\mu$ after increasing it by $1$.
  Then $\omega^{-1}(\lambda,\mu)$ is the pair obtained by moving the largest part of $\mu$ to $\lambda$ after reducing it by $1$ (see Figure~\ref{fig:sign_inv}).
\begin{figure}
\begin{tikzpicture}
\draw[->,ultra thick] (2,0)--(2,5) node[above]{};
\node [label={[anchor=south east, draw=red, inner sep=0pt]west:$0$}] at (2, 0)   (0){};
\node [label={[anchor=south east, draw=red, inner sep=0pt]west:$1$}] at (2, 1)   (1){};
\node [label={[anchor=south east, draw=red, inner sep=0pt]west:$2$}] at (2, 2)   (2){};
\node [label={[anchor=south east, draw=red, inner sep=0pt]west:$3$}] at (2, 3)   (3){};
\node [label={[anchor=south east, draw=red, inner sep=0pt]west:$4$}] at (2, 4)   (4){};
\draw[RoyalBlue, ultra thick] (2,0) -- (3,0);
\draw[red, ultra thick] (1,1) -- (2,1);
\draw[red, ultra thick] (1,3) -- (2,3);
\draw[RoyalBlue, ultra thick] (2,3) -- (3,3);
\draw[RoyalBlue, ultra thick] (2,3.1) -- (3,3.1);

\draw[->,ultra thick] (8,0)--(8,5) node[above]{};
\node [label={[anchor=south east, draw=red, inner sep=0pt]west:$0$}] at (8, 0)   (x0){};
\node [label={[anchor=south east, draw=red, inner sep=0pt]west:$1$}] at (8, 1)   (x1){};
\node [label={[anchor=south east, draw=red, inner sep=0pt]west:$2$}] at (8, 2)   (x2){};
\node [label={[anchor=south east, draw=red, inner sep=0pt]west:$3$}] at (8, 3)   (x3){};
\node [label={[anchor=south east, draw=red, inner sep=0pt]west:$4$}] at (8, 4)   (x4){};
\draw[RoyalBlue, ultra thick] (8,0) -- (9,0);
\draw[red, ultra thick] (7,1) -- (8,1);
\draw[red, ultra thick] (7,3) -- (8,3);
\draw[RoyalBlue, ultra thick] (8,3) -- (9,3);
\draw[red, ultra thick] (7,4) -- (8,4);
\end{tikzpicture}
\caption{A demonstration of the involution between the monomials $\color{red}{(u^1z)(u^3z)}\color{RoyalBlue}{(u^0vz)(u^3vz)^2}$ on the left and $\color{red}{(u^1z)(u^3z)(u^4z)}\color{RoyalBlue}{(u^0vz)(u^3vz)}$ on the right}
\label{fig:sign_inv}
\end{figure}
  We may rewrite (\ref{eq:smo-hook}) as
  \begin{align*}
    \smo{S_{(a|b)}}_n & = + |\gr n{a+1}b| + |\rr n{a+1}b|\\
                      & \phantom{=} - |\gr n{a+2}{b-1}| - |\rr n{a+2}{b-1}|\\
                      & \phantom{=} + |\gr n{a+3}{b-1}| - |\rr n{a+3}{b-1}|\\
                      & \phantom{=}\vdots\\
                      & \phantom{=} (-1)^b |\gr n{a+b+1}{0}| + (-1)^b|\rr n{a+b+1}{0}|.
  \end{align*}
  By virtue of the bijection $\omega$, we can cancel out the \textcolor{RoyalBlue}{blue term} in each row of the above expression with the \textcolor{red}{red term} in the row below it.
  Noting further, that $\gr n{a+b+1}{0}=0$, we have
  \begin{displaymath}
    \smo{S_{(a|b)}}_n = |\rr n{a+1}b|,
  \end{displaymath}
  which is precisely the number claimed in the first part of the theorem.

  Given $(\lambda,\mu)\in \rr n{a+1}b$, form a partition $\rho$ by subtracting one from the largest part of the partition $\lambda\cup\mu$ formed by arranging the parts of $\lambda$ and $\mu$ together in weakly decreasing order.
  Also, the parts of $\mu$, excepting the first, form a set of $n-b-1$ distinct parts of $\rho$, including a possible $0$-part.
  Thus they correspond to a choice of $n-b-1$ removable cells of $\rho$ that do not lie in its first row, except when $\rho$ has a part equal to $0$, in which case we can also choose a zero part of $\mu$, giving the expression in the second part of the statement of the theorem.
\end{proof}
\begin{example}
  To compute $\smo{S_{(4,1,1)}}_5$ we take $a=3$, $b=2$, and $n=5$.
  According to Theorem~\ref{theorem:hook-sign}, we must enumerate integers
  \begin{displaymath}
    \lambda_1\geq \lambda_2\geq 0,\quad \mu_1>\mu_2>\mu_3\geq 0,\quad \mu_1>\lambda_1
  \end{displaymath}
  such that
  \begin{displaymath}
    \lambda_1+\lambda_2+\mu_1+\mu_2+\mu_3=4.
  \end{displaymath}
  The only possibilities for $(\lambda,\mu)$ are $((1,0),(2,1,0))$ and $((0,0),(3,1,0))$.
  Therefore $\smo{S_{(4,1,1)}}_5=2$.

  In the second formulation, the sum is over partitions $\rho$ of $3$ with $5$ non-negative parts.
  For $\rho=(3,0,0,0,0)$, $r_\rho=1$; for $\rho=(2,1,0,0,0)$, $r_\rho=2$, and for $\rho=(1,1,1,0,0,0)$, $r_\rho=2$.
  On the other hand $n-b-1=2$.
  Thus we get
  \begin{displaymath}
    \smo{S_{(4,1,1)}}_5 = \binom 12 + \binom 22 + \binom 22 =2.
  \end{displaymath}
\end{example}
\begin{example}
  To compute $\smo{S_{(3,1)}}_3$, take $a=2$, $b=1$, and $n=3$.
  We must count the number of possibilities for
  \begin{displaymath}
    \lambda_1\geq 0,\quad \mu_1>\mu_2\geq 0,\quad \mu_1>\lambda_1,\quad \lambda_1+\mu_1+\mu_2=3.
  \end{displaymath}
  The possibilities for $(\lambda,\mu)$ are $((0),(2,1))$, $((0),(3,0))$, $((1),(2,0))$, so that $\smo{S_{(3,1)}}_3=3$.

  In the second formulation, $P(2,3)={(2,0,0),(1,1,0)}$.
  We have $r_{(2,0,0)}=1$, while $r_{(1,1,0)}=2$, and $n-b-1=1$, so
  \begin{displaymath}
    {S_{(3,1)}}_3 = \binom 11 + \binom 21 =3.
  \end{displaymath}
\end{example}
\begin{corollary}
  For all $a,b\geq 0$,
  \begin{displaymath}
    \smo{S_{(a|b)}}_n > 0 \text{ if and only if } b<n \text{ and } \binom{n-b}2\leq a+1.
  \end{displaymath}
\end{corollary}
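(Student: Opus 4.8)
The plan is to read off both directions of the equivalence from the two descriptions of $\smo{S_{(a|b)}}_n$ supplied by Theorem~\ref{theorem:hook-sign}: the binomial sum $\sum_{\rho\in P(a,n)}\binom{r_\rho}{n-b-1}$ makes the range $b\geq n$ transparent, while the pair-counting description carries the rest of the argument when $b<n$.

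First I would dispose of necessity. If $b\geq n$, then either directly the sequence $\mu$ in an admissible pair would need a first part $\mu_1>\lambda_1\geq0$ while having $n-b\leq0$ parts, or, more formally, $n-b-1<0$ forces every term of $\sum_{\rho}\binom{r_\rho}{n-b-1}$ to vanish; either way $\smo{S_{(a|b)}}_n=0$, so positivity of the signed moment implies $b<n$. Assuming now $b<n$, write $m=n-b\geq1$. In any admissible pair $(\lambda,\mu)$ the parts $\mu_1>\cdots>\mu_m\geq0$ are distinct non-negative integers, hence $\mu_i\geq m-i$ and $|\mu|\geq\sum_{i=1}^m(m-i)=\binom m2$; since $|\lambda|\geq0$ and $|\lambda|+|\mu|=a+1$, this yields $\binom m2\leq a+1$.

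For sufficiency, suppose $b<n$ (so $m=n-b\geq1$) and $\binom m2\leq a+1$; it is enough to exhibit one admissible pair. Take $\lambda=(0,\dots,0)$ of length $b$ and
\begin{displaymath}
\mu=\Bigl(a+1-\binom m2+(m-1),\;m-2,\;m-3,\;\dots,\;1,\;0\Bigr)
\end{displaymath}
(so that $\mu=(a+1)$ when $m=1$). One checks that $\mu_1>\mu_2>\cdots>\mu_m=0$, the inequality $\mu_1>\mu_2$ using $a+1\geq\binom m2$; that $\mu_1\geq1$, whence $\mu_1>\lambda_1=0$; and that $|\lambda|+|\mu|=a+1$, using $\binom m2-\binom{m-1}2=m-1$. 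By Theorem~\ref{theorem:hook-sign} this pair is counted by $\smo{S_{(a|b)}}_n$, so $\smo{S_{(a|b)}}_n\geq1$.

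I do not foresee a genuine obstacle here: both the weight bound for strict partitions and the explicit construction are elementary once Theorem~\ref{theorem:hook-sign} is in hand. The only points calling for a little care are the degenerate cases $b=0$ (where $\lambda$ is the empty sequence and $\lambda_1$ is to be read as $0$) and $m=1$ (where $\mu$ has the single part $a+1$), together with the convention $\binom{x}{k}=0$ for $k<0$, which is what makes the range $b\geq n$ vanish on the nose.
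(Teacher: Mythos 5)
Your proof is correct and follows essentially the same route as the paper: the same lower bound $|\mu|\geq\binom{n-b}{2}$ for the strictly decreasing sequence gives necessity, and your explicit pair with $\lambda=(0,\dots,0)$ and $\mu_1=a+1-\binom{m}{2}+(m-1)=a+1-\binom{n-b-1}{2}$ is exactly the paper's witness for sufficiency. The only (harmless) addition is your alternative remark that $b\geq n$ also kills the binomial-sum formulation.
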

\begin{proof}
  Suppose $b<n$, $\binom{n-b}2\leq a+1$, take $\lambda_i=0$ for all $i=1,\dotsc,b$, and take $\mu_1=a+1-\binom{n-b-1}2$, and $\mu_i=n-b-i$ for $1< i\leq n-b$.
  This satisfies the conditions (1) through (4) of Theorem~\ref{theorem:hook-sign}, so that $\smo{S_{(a|b)}}_n > 0$.

  Conversely, if $\lambda_1,\dots,\lambda_b,\mu_1,\dotsc,\mu_{n-b}$ exist satisfying the conditions (1) to (4), then surely $b<n$, and also $\mu_1+\dotsb+\mu_{n-b}\leq a+1$.
  Since $\mu_1>\dotsb>\mu_{n-b}\geq 0$, $\mu_1+\dotsb+\mu_{n-b}\geq \binom{n-b}2$.
  It follows that $\binom{n-b}2\leq a+1$.
\end{proof}

By Littlewood's formula (Equation \eqref{al:identity}) we have the following corollary
\begin{corollary}
\label{cor:pleth_hook_sign}
For all $a,b\geq 0$ with $b+1 \leq n$, the coefficient of $s_{(a|b)}$ in the expansion of the plethysm $s_{(1^n)}[1+h_1+h_2+\dotsb]$ is the number of pairs of partitions $(\lambda,\mu)$ that satisfy the conditions of Theorem \ref{theorem:hook-sign}.
\end{corollary}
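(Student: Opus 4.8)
The plan is to specialize Littlewood's identity \eqref{al:identity} to the partition $\mu=(1^n)$, for which the Specht module $V_{(1^n)}$ is the sign representation of $S_n$. With this choice, the coefficient of $s_\nu$ in $s_{(1^n)}[1+h_1+h_2+\dotsb]$ is precisely the restriction coefficient $r_{\nu,(1^n)}$, which by definition is the multiplicity of the sign representation in $\text{Res}^{GL_n(\CC)}_{S_n} W_\nu(\CC^n)$. So the corollary will follow once that multiplicity is identified with the count of pairs $(\lambda,\mu)$ from Theorem~\ref{theorem:hook-sign}.

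Next I would invoke the remark following Definition~\ref{def:mom}: the sequence $(W_\nu(\CC^n))_n$ is eventually polynomial with character polynomial $S_\nu$ (as recalled from \cite{ALCO_2021__4_4_703_0}, and for the hook case as encoded in \eqref{eq:hookalt}), so the multiplicity of the sign representation of $S_n$ in $W_\nu(\CC^n)$ equals the signed moment $\smo{S_\nu}_n$ whenever $n$ lies in the range for which $S_\nu$ genuinely computes the character. For the hook $\nu=(a\mid b)=(a+1,1^b)$ this range is exactly $n\geq b+1$, which is the hypothesis of the corollary. Combining the two observations, the coefficient of $s_{(a\mid b)}$ in $s_{(1^n)}[1+h_1+h_2+\dotsb]$ equals $\smo{S_{(a\mid b)}}_n$.

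Finally I would apply Theorem~\ref{theorem:hook-sign}, which identifies $\smo{S_{(a\mid b)}}_n$ with the number of pairs $(\lambda,\mu)$ of partitions satisfying its conditions (1)--(4); this is exactly the claimed count, completing the proof. The only point requiring a little care is the bookkeeping in the previous paragraph: one must confirm that for $n\geq b+1$ the polynomial $S_{(a\mid b)}$ really evaluates to the character of the (nonzero) Weyl module $W_{(a+1,1^b)}(\CC^n)$, so that the signed moment is literally the sought multiplicity and not merely its stable value; this is guaranteed by the construction of these character polynomials in \cite{ALCO_2021__4_4_703_0} together with \eqref{eq:hookalt}. No genuinely new difficulty arises, so I do not anticipate a substantive obstacle.
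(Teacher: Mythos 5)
Your proposal is correct and follows essentially the same route as the paper: Littlewood's identity \eqref{al:identity} specialized to $\mu=(1^n)$ identifies the coefficient with the sign-representation multiplicity, which Theorem~\ref{theorem:hook-sign} (via the signed moment $\smo{S_{(a\mid b)}}_n$) converts to the stated count of pairs. The extra care you take about the range $n\geq b+1$ where the character polynomial computes the actual character is just the bookkeeping the paper leaves implicit.
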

\subsection{Multiplicity of the trivial representation in the restriction of hooks}

By the Pieri formula \eqref{eq:hookalt} the moment of $S_{(a|b)}$ is an alternating sum of the moments of the form $H_rE_s$.
The moment of $H_{r}E_{s}$ is a special case of Corollary 4.2 of \cite{ALCO_2021__4_4_703_0}, which we reproduce here.
\begin{theorem}
\label{th:mom_gen_hook}
For integers $k,l\geq 0$, the moment of $H_kE_l$ is 
\begin{displaymath}
\sum_{k,l,n}\mom{H_kE_l}_nu^kv^lz^n= \frac{\prod_{j\geq 0} (1+u^jvz)}{\prod_{j\geq 0} (1-u^jz)}.
\end{displaymath}
\end{theorem}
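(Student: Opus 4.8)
The plan is to follow the proof of Theorem~\ref{th:gen_hook} essentially verbatim, the only change being that the summand no longer carries the overall factor $\sgn(\sigma) = (-1)^{|\alpha|-l(\alpha)}$; alternatively, one may simply invoke Corollary~4.2 of \cite{ALCO_2021__4_4_703_0}, of which this is the relevant special case. Concretely, I would first substitute the expressions for $H_k$ and $E_l$ from Theorem~\ref{th:charHE} into $H_k(X(\sigma))E_l(X(\sigma))$ and group the permutations $\sigma\in S_n$ by cycle type $\alpha\vdash n$, obtaining
\begin{displaymath}
\mom{H_kE_l}_n = \sum_{\alpha\vdash n}\frac1{z_\alpha}\sum_{\beta\vdash k,\ \gamma\vdash l}(-1)^{|\gamma|-l(\gamma)}\mch\alpha\beta\binom\alpha\gamma.
\end{displaymath}
Comparing with the corresponding line in the proof of Theorem~\ref{th:gen_hook}, the absence of the $(-1)^{|\alpha|-l(\alpha)}$ factor is precisely what will exchange the roles of the $1+(\cdot)$ and $1-(\cdot)$ factors and move $v$ from the denominator of~\eqref{eq:signed-moment-HE} into the numerator.

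Next, writing $\alpha=1^{a_1}2^{a_2}\cdots$, $\beta=1^{b_1}2^{b_2}\cdots$, $\gamma=1^{c_1}2^{c_2}\cdots$ and using $\mch\alpha\beta=\prod_i\mch{a_i}{b_i}$, $\binom\alpha\gamma=\prod_i\binom{a_i}{c_i}$, $z_\alpha=\prod_i i^{a_i}a_i!$, and $(-1)^{|\gamma|-l(\gamma)}=\prod_i(-1)^{(i-1)c_i}$, I would recognize $\sum_{k,l,n}\mom{H_kE_l}_n u^kv^lz^n$ as a product over cycle lengths $i\geq 1$:
\begin{displaymath}
\prod_{i\geq 1}\ \sum_{a_i\geq 0}\frac{z^{ia_i}}{i^{a_i}a_i!}\Bigl(\sum_{b_i\geq 0}\mch{a_i}{b_i}u^{ib_i}\Bigr)\Bigl(\sum_{c_i\geq 0}(-1)^{(i-1)c_i}\binom{a_i}{c_i}v^{ic_i}\Bigr).
\end{displaymath}
By the negative-binomial and binomial series, the two inner sums are $(1-u^i)^{-a_i}$ and $(1-(-v)^i)^{a_i}$ respectively, after which summing $\sum_{a_i\geq 0}\tfrac1{a_i!}(\cdot)^{a_i}$ collapses each factor to $\exp\!\bigl(\tfrac{z^i(1-(-v)^i)}{i(1-u^i)}\bigr)$.

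Finally, I would take the product over $i$ inside a single exponential, expand $\tfrac1{1-u^i}=\sum_{j\geq 0}u^{ij}$, interchange the order of summation over $i$ and $j$, and recognize the inner series in $i$ (using $\log(1+x)=-\sum_{i\geq 1}(-x)^i/i$) as $\sum_{j\geq 0}\bigl(-\log(1-u^jz)+\log(1+u^jvz)\bigr)$. Exponentiating then yields $\prod_{j\geq 0}\frac{1+u^jvz}{1-u^jz}$, which is the claimed formula. I expect the only point requiring genuine care to be the sign bookkeeping in the $E_l$-factor — in particular verifying $(-1)^{i-1}v^i=-(-v)^i$, so that $\sum_{c_i\geq 0}(-1)^{(i-1)c_i}\binom{a_i}{c_i}v^{ic_i}=(1-(-v)^i)^{a_i}$ — since everything else is the same formal manipulation already carried out in the proof of Theorem~\ref{th:gen_hook}.
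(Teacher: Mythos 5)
Your proposal is correct and follows essentially the same route as the paper, which likewise obtains $\mom{H_kE_l}_n$ as the coefficient of $u^kv^lz^n$ in $\prod_{i\geq 1}\exp\bigl(\tfrac{z^i(1-(-v)^i)}{i(1-u^i)}\bigr)$ and then simplifies exactly as in the proof of Theorem~\ref{th:gen_hook} (the paper also notes this is a special case of Corollary~4.2 of \cite{ALCO_2021__4_4_703_0}, as you suggest). Your sign bookkeeping $(-1)^{(i-1)c_i}v^{ic_i}$ giving $(1-(-v)^i)^{a_i}$ is right, and your write-up is in fact more detailed than the paper's two-line proof.
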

\begin{proof}
The moment $\mom{H_kE_l}_n$ is the coefficient of $u^kv^lz^n$ in\linebreak $\prod_{i\geq 1}\text{exp}\left(\frac{ z^i(1-(-v)^i}{1-u^i} \right)$. We obtain the generating function from simplifying this exponent as in the proof of Theorem~\ref{th:gen_hook}. 
\end{proof}

%

\begin{theorem}
  \label{theorem:hook-triv}
  For all $a,b\geq 0$, then $\mom{S_{(a|b)}}_n$ is the number of pairs $(\lambda,\mu)$ of partitions such that
  \begin{enumerate}
  \item $\lambda=(\lambda_1,\dotsc,\lambda_{n-b})$, where $\lambda_1\geq \dotsb\geq \lambda_{n-b}\geq 0$,
  \item $\mu=(\mu_1,\dotsc,\mu_{b})$, with $\mu_1>\dotsb>\mu_{b}\geq 0$,
  \item $|\lambda|+|\mu|=a+1$,
  \item $\mu_1<\lambda_1-1$.
  \end{enumerate}
  Equivalently, this $\mom{S_{(a|b)}}_n$ is
  \begin{displaymath}
    \sum_{\rho\in P(a,n)} \binom{r_\rho}{b-1}+\sum_{\rho\in \tilde{P}(a,n)} \binom{r_\rho-1}{b-1},
  \end{displaymath}
  where $P(a,n)$ denotes the set of partitions of $a+n$ with $n$ non-negative parts, $\tilde{P}(a,n)$ denotes the subset of $P(a,n)$ of partitions whose second-largest part is one less than the largest part, and for a partition $\rho\in P(a,n)$, $r_\rho$ is the number of removable cells of $\rho$ that are not in its first row.
\end{theorem}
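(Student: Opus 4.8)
The plan is to mirror the proof of Theorem~\ref{theorem:hook-sign}, substituting the generating function of Theorem~\ref{th:mom_gen_hook} for that of Theorem~\ref{th:gen_hook}. First I would define, for each triple $(n,a,b)$, the set $\cd nab$ of pairs $(\lambda,\mu)$ with $\lambda=(\lambda_1\geq\dotsb\geq\lambda_{n-b}\geq 0)$, $\mu=(\mu_1>\dotsb>\mu_b\geq 0)$, and $|\lambda|+|\mu|=a$; extracting the coefficient of $u^av^bz^n$ from the product in Theorem~\ref{th:mom_gen_hook} shows $|\cd nab|=\mom{H_aE_b}_n$, exactly as in Proposition~\ref{prop:HkEl} but with the roles of the $u^jz$ and $u^jvz$ factors (hence of $\lambda$ and $\mu$) interchanged relative to the signed case. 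Combining this with the Pieri expansion \eqref{eq:hookalt} gives $\mom{S_{(a|b)}}_n=\sum_{i=0}^b(-1)^i|\cd n{a+1+i}{b-i}|$.

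Next I would build the sign-reversing involution. Partition $\cd nab$ into $\gd nab$, the pairs where a largest part of $\lambda\cup\mu$ occurs in $\mu$, and $\gr nab$ (reusing the blue macro is fine, but I would name it $\gd nab$'s complement), the pairs where the largest part occurs only in $\lambda$. The subtlety—and this is where the trivial case genuinely differs from the sign case—is the strict inequality $\mu_1<\lambda_1-1$ rather than $\mu_1>\lambda_1$: because $\mu$ has strictly decreasing parts, moving a part between $\lambda$ and $\mu$ must respect that strictness, so the natural bijection $\gd nab\to(\text{complement in }\cd n{a+1}{b-1})$ moves a largest part of $\mu$ into $\lambda$ after \emph{increasing} it by $1$ (so it becomes the unique strictly-largest part, now living in $\lambda$), with inverse moving the largest part of $\lambda$ back into $\mu$ after \emph{decreasing} it by $1$—and this inverse is only well-defined when the largest part of $\lambda$ exceeds the current $\mu_1$ by at least $2$, which is precisely the shift that produces the condition $\mu_1<\lambda_1-1$ on the surviving set. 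I would check carefully that this map lands in the complement of $\gd n{a+1}{b-1}$ and is a bijection, then telescope the alternating sum exactly as in the proof of Theorem~\ref{theorem:hook-sign}, using that the $i=b$ term's "largest part in $\mu$" class $\gd n{a+b+1}0$ is empty (since $\mu$ is empty when $b=0$), to conclude $\mom{S_{(a|b)}}_n$ equals the cardinality of the surviving class, which is exactly the set described by conditions (1)--(4).

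Finally I would translate into the $\rho$-statistics formula. Given a surviving pair $(\lambda,\mu)$ with largest part in $\lambda$ and $\mu_1<\lambda_1-1$, I form $\rho$ by arranging all parts of $\lambda\cup\mu$ weakly decreasingly and subtracting $1$ from the largest part; then $|\rho|=a$ and $\rho$ has $n$ non-negative parts. The parts of $\mu$ (all $b$ of them, which are automatically distinct from one another and, via the decrement, distinct from the top part of $\rho$ unless the second-largest part of the original $\lambda\cup\mu$ equals $\lambda_1$) correspond to a choice of $b$ removable cells of $\rho$ not in the first row. The condition $\mu_1<\lambda_1-1$ means the second part of $\rho$ is at most $\lambda_1-2$ \emph{unless} two copies of $\lambda_1$ were present, which is the case distinguishing $P(a,n)$ from $\tilde P(a,n)$: when $\rho\notin\tilde P(a,n)$ the decremented top part is not itself removable into the picture and we get $\binom{r_\rho}{b-1}$ after accounting for one forced choice, whereas when $\rho\in\tilde P(a,n)$ one of the removable cells is "used up" and we get $\binom{r_\rho-1}{b-1}$. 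I would verify this bookkeeping on the small examples (adapting the $S_{(4,1,1)}$ and $S_{(3,1)}$ computations). The main obstacle I anticipate is exactly this last case analysis—pinning down precisely when the decremented largest part of $\lambda\cup\mu$ coincides with, or is forced apart from, the remaining parts, so that the two binomial sums are correct and mutually exclusive; the involution itself is a routine adaptation of the one already in the paper.
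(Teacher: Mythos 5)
Your set-up (the coefficient extraction from Theorem~\ref{th:mom_gen_hook}, the Pieri expansion, and the identity $\mom{S_{(a|b)}}_n=\sum_{i=0}^b(-1)^i|\cd n{a+1+i}{b-i}|$) matches the paper, but the central cancellation step has a genuine gap. You split $\cd nab$ according to whether a largest part of $\lambda\cup\mu$ lies in $\mu$ or only in $\lambda$, and claim that ``increment a largest part of $\mu$ by $1$ and move it to $\lambda$'' is a bijection from the first class onto the complementary class inside $\cd n{a+1}{b-1}$. That map is injective but not surjective: since the moved part $\mu_1+1$ strictly exceeds every part of the old $\lambda$, every pair in its image has a \emph{unique} largest $\lambda$-part and satisfies $\mu_1\leq\lambda_1-2$. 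So the image misses (i) all pairs with $\mu_1=\lambda_1-1$, and (ii) all pairs whose largest $\lambda$-part is repeated; for instance $\lambda=(2,2)$, $\mu=(0)$ lies in your target class but has no preimage, because decrementing a largest $\lambda$-part and moving it to $\mu$ gives $\lambda=(2)$, $\mu=(1,0)$, whose largest part lies in $\lambda$, not $\mu$. Consequently the telescoping does not close up, and with your split the uncancelled piece of the $i=0$ term would be the set of pairs with $\mu_1<\lambda_1$, not the set with $\mu_1<\lambda_1-1$ demanded by the theorem. You cannot obtain the extra condition merely by noting that the inverse is ``only well-defined when $\lambda_1\geq\mu_1+2$'': the surviving class must be the complement of a class that genuinely is in bijection, and with your choice of classes it is not.

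The repair, which is exactly what the paper does, is to draw the dividing line at $\lambda_1-1$ rather than at $\lambda_1$: declare a pair cancelled when $\mu_1\geq\lambda_1-1$ and surviving when $\mu_1\leq\lambda_1-2$. Then incrementing $\mu_1$ and moving it into $\lambda$ maps the cancelled part of $\cd nab$ bijectively onto the surviving part of $\cd n{a+1}{b-1}$: when $\mu_1=\lambda_1-1$ the moved part ties the old $\lambda_1$, which is legal because $\lambda$ may repeat parts, and these images are precisely the repeated-largest-part pairs your map misses; the inverse (decrement one copy of the largest $\lambda$-part and move it to $\mu$) is always defined on the surviving class since $\lambda_1-1>\mu_1$ there. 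With the cancelled class empty when $b=0$, the alternating sum collapses to the surviving part of the $i=0$ term, which is exactly conditions (1)--(4). Your final passage to the $\rho$-formula is also left as unresolved ``bookkeeping''; the paper does it by transporting the surviving set along this same bijection to pairs whose $\mu$ has $b+1$ distinct parts and reading those parts as removable cells, which is what makes the split between $P(a,n)$ and $\tilde P(a,n)$ transparent. But the essential missing idea is the correct threshold $\mu_1\geq\lambda_1-1$ in the involution.
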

\begin{proof}
  Let $\cd nab$ denote the set of pairs $(\lambda,\mu)$, where
  \begin{enumerate}
  \item $\lambda=(\lambda_1,\dotsc,\lambda_{n-b})$, where $\lambda_1\geq \dotsb\geq \lambda_{n-b}\geq 0$,
  \item $\mu=(\mu_1,\dotsc,\mu_{b})$, where $\mu_1>\dotsb>\mu_{b}\geq 0$,
  \item $|\lambda|+|\mu|=a$.
  \end{enumerate}
  Then evidently, $|\cd nab|$ is the coefficient of $u^av^bz^n$ in $\frac{\prod_{j \geq 0} (1+u^jvz)}{\prod_{j \geq 0} (1-u^jz)}$, and hence, by Theorem ~\ref{theorem:hook-sign}, equals $\mom{H_aE_b}_n$.
  Note that
  \begin{equation}
    \label{eq:smo-hook}
    \mom{S_{(a|b)}}_n = \sum_{i=0}^b (-1)^i\mom{H_{a+1+i}E_{b-i}}_n = \sum_{i=0}^b (-1)^i |\cd {n}{a+1+i}{b-i}|.
  \end{equation}
The proof proceeds along the lines of that to Theorem \ref{theorem:hook-sign}. Observe that the roles of $\lambda$ and $\mu$ are reversed in this case. Recall that the earlier involution proceeded as in Figure ~\ref{fig:sign_inv} by either increasing a largest blue part (i.e. a part in $\lambda$) by one and changing its colour to red (i.e. appending this part to $\mu$) or reducing a largest red part (i.e. a part in $\mu$) and changing its colour to blue (i.e. appending it to $\lambda$). In this case, the involution attempts to do the reverse, as illustrated in Figure ~\ref{fig:sign_triv}.
\begin{figure}
\begin{tikzpicture}
\draw[->,ultra thick] (2,0)--(2,5) node[above]{};
\node [label={[anchor=south east, draw=red, inner sep=0pt]west:$0$}] at (2, 0)   (0){};
\node [label={[anchor=south east, draw=red, inner sep=0pt]west:$1$}] at (2, 1)   (1){};
\node [label={[anchor=south east, draw=red, inner sep=0pt]west:$2$}] at (2, 2)   (2){};
\node [label={[anchor=south east, draw=red, inner sep=0pt]west:$3$}] at (2, 3)   (3){};
\node [label={[anchor=south east, draw=red, inner sep=0pt]west:$4$}] at (2, 4)   (4){};
\draw[RoyalBlue, ultra thick] (2,0) -- (3,0);
\draw[red, ultra thick] (1,1) -- (2,1);
\draw[red, ultra thick] (1,3) -- (2,3);
\draw[RoyalBlue, ultra thick] (2,3) -- (3,3);
\draw[RoyalBlue, ultra thick] (2,3.1) -- (3,3.1);

\draw[->,ultra thick] (8,0)--(8,5) node[above]{};
\node [label={[anchor=south east, draw=red, inner sep=0pt]west:$0$}] at (8, 0)   (x0){};
\node [label={[anchor=south east, draw=red, inner sep=0pt]west:$1$}] at (8, 1)   (x1){};
\node [label={[anchor=south east, draw=red, inner sep=0pt]west:$2$}] at (8, 2)   (x2){};
\node [label={[anchor=south east, draw=red, inner sep=0pt]west:$3$}] at (8, 3)   (x3){};
\node [label={[anchor=south east, draw=red, inner sep=0pt]west:$4$}] at (8, 4)   (x4){};
\draw[RoyalBlue, ultra thick] (8,0) -- (9,0);
\draw[red, ultra thick] (7,1) -- (8,1);
\draw[RoyalBlue, ultra thick] (8,3) -- (9,3);
\draw[RoyalBlue, ultra thick] (8,3.1) -- (9,3.1);
\draw[RoyalBlue, ultra thick] (8,4) -- (9,4);
\end{tikzpicture}
\caption{A demonstration of the involution between the monomials $\color{red}{(v^1z)(v^3z)}\color{RoyalBlue}{(uv^0z)(uv^3z)^2}$ on the left and $\color{red}{(v^1z)}\color{RoyalBlue}{(uv^0z)(uv^3z)(v^3z)(v^4z)}$ on the right.}
\label{fig:sign_triv}
\end{figure}
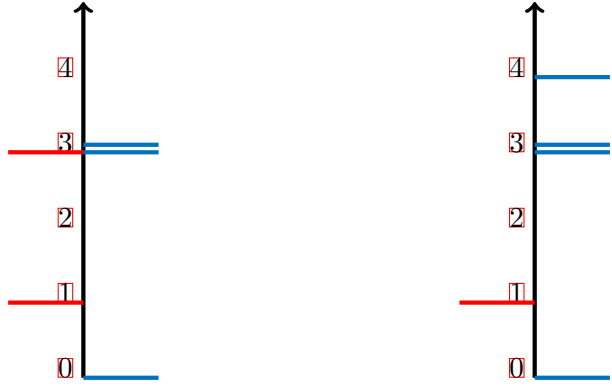

This presents an obstacle however, when the largest red part (i.e. a part in $\mu$) is one less than the largest blue part (i.e. a part in $\lambda$). In this case, the involution attempts to push down a blue part and change its colour to red, but the resulting bipartition is not a valid one since the parts of $\mu$ must be distinct. Thus in this case, we prescribe that the involution ignore the largest part of the bipartition and proceed to the next largest part, as demonstrated in Figure \ref{fig:triv_alt}. 

\begin{figure}
\begin{tikzpicture}
\draw[->,ultra thick] (2,0)--(2,5) node[above]{};
\node [label={[anchor=south east, draw=red, inner sep=0pt]west:$0$}] at (2, 0)   (0){};
\node [label={[anchor=south east, draw=red, inner sep=0pt]west:$1$}] at (2, 1)   (1){};
\node [label={[anchor=south east, draw=red, inner sep=0pt]west:$2$}] at (2, 2)   (2){};
\node [label={[anchor=south east, draw=red, inner sep=0pt]west:$3$}] at (2, 3)   (3){};
\node [label={[anchor=south east, draw=red, inner sep=0pt]west:$4$}] at (2, 4)   (4){};
\draw[RoyalBlue, ultra thick] (2,0) -- (3,0);
\draw[red, ultra thick] (1,1) -- (2,1);
\draw[red, ultra thick] (1,2) -- (2,2);
\draw[RoyalBlue, ultra thick] (2,2) -- (3,2);
\draw[RoyalBlue, ultra thick] (2,3) -- (3,3);
\draw[RoyalBlue, ultra thick] (2,3.1) -- (3,3.1);

\draw[->,ultra thick] (8,0)--(8,5) node[above]{};
\node [label={[anchor=south east, draw=red, inner sep=0pt]west:$0$}] at (8, 0)   (x0){};
\node [label={[anchor=south east, draw=red, inner sep=0pt]west:$1$}] at (8, 1)   (x1){};
\node [label={[anchor=south east, draw=red, inner sep=0pt]west:$2$}] at (8, 2)   (x2){};
\node [label={[anchor=south east, draw=red, inner sep=0pt]west:$3$}] at (8, 3)   (x3){};
\node [label={[anchor=south east, draw=red, inner sep=0pt]west:$4$}] at (8, 4)   (x4){};
\draw[RoyalBlue, ultra thick] (8,0) -- (9,0);
\draw[red, ultra thick] (7,1) -- (8,1);
\draw[RoyalBlue, ultra thick] (8,2) -- (9,2);
\draw[RoyalBlue, ultra thick] (8,3) -- (9,3);
\draw[RoyalBlue, ultra thick] (8,3.1) -- (9,3.1);
\draw[RoyalBlue, ultra thick] (8,3.2) -- (9,3.2);
\end{tikzpicture}
\caption{The involution when the largest blue part cannot be pushed down, since a red part already occupies the level below it.
  Instead, the largest red part is raised up to a blue part.}
\label{fig:triv_alt}
\end{figure}
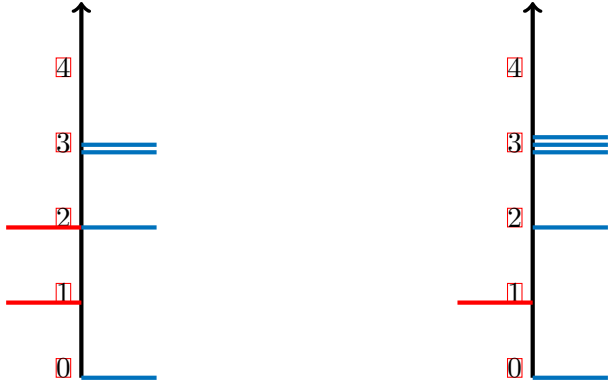
  Partition $\cd nab$ into two parts: $\gd nab$, which consists of pairs $(\lambda,\mu)\in \cd nab$ such that the largest among the parts of $\lambda$ and $\mu$ occurs in $\lambda$ and the largest part in $\mu$ is at least $2$ less than the largest part in $\lambda$ and $\rd nab$, which consists of all other pairs $(\lambda,\mu)\in \cd nab$.
  \begin{displaymath}
    \cd nab = \gd nab \sqcup \rd nab.
  \end{displaymath}
  Then
  \begin{align*}
    \mom{S_{(a|b)}}_n & = + |\gd n{a+1}b| + |\rd n{a+1}b|\\
                      & \phantom{=} - |\gd n{a+2}{b-1}| - |\rd n{a+2}{b-1}|\\
                      & \phantom{=} + |\gd n{a+3}{b-1}| - |\rd n{a+3}{b-1}|\\
                      & \phantom{=}\vdots\\
                      & \phantom{=} (-1)^b |\gd n{a+b+1}{0}| + (-1)^b|\rd n{a+b+1}{0}|.
  \end{align*}
  By virtue of the involution, we can cancel out the \textcolor{RoyalBlue}{blue term} in each row of the above expression with the \textcolor{red}{red term} in the row above it.
  Noting further, that $\rd n{a+b+1}{0}=0$, we have
  \begin{displaymath}
    \mom{S_{(a|b)}}_n = |\gd n{a+1}b|,
  \end{displaymath}
  which is precisely the number claimed in the first part of the theorem.

  The cardinality of $\gd n{a+1}b$ is the same as that of $\rd n{a}{b+1}$. Given $(\lambda,\mu)\in \rd n{a}{b+1}$, form a partition $\rho$ by arranging the parts of $\lambda$ and $\mu$ together in weakly decreasing order. Also, the parts of $\mu$, form a set of $b+1$ distinct parts of $\rho$, including a possible $0$-part.
  Thus they correspond to a choice of $b+1$ removable cells of $\rho$, including either the first largest part of $\rho$, or excluding it but including then the second largest part of $\rho$ (in this case though the largest part is \textcolor{RoyalBlue}{blue}, it cannot be pushed down due to a \textcolor{red}{red} part already occupying the lower position).
\end{proof}
This interpretation allows us to easily recover Theorem 4.8.3 of \cite{ALCO_2021__4_2_189_0}
\begin{corollary}
\label{cor:pos_mom}
  For all $a,b \geq 0$ with $b+1 \leq n$,
  \begin{displaymath}
    \mom{S_{(a|b)}}_n > 0 \text{ if and only if } b<n \text{ and } \binom{b+1}2\leq a.
  \end{displaymath}
\end{corollary}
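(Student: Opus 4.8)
The plan is to read the result straight off the combinatorial description of $\mom{S_{(a|b)}}_n$ furnished by Theorem~\ref{theorem:hook-triv}, in close analogy with the positivity criterion for the sign representation that immediately follows Theorem~\ref{theorem:hook-sign}. Since the hypothesis $b+1\le n$ already forces $b<n$, the actual content is the equivalence $\mom{S_{(a|b)}}_n>0 \iff \binom{b+1}2\le a$. I would first dispose of the degenerate case $b=0$: then $(a\mid 0)$ is a single row, condition~(4) of Theorem~\ref{theorem:hook-triv} refers to an empty $\mu$ and is vacuous, both sides of the asserted equivalence hold, and $\binom12=0$; so one may assume $b\ge 1$ from here on.

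For the forward implication, suppose a pair $(\lambda,\mu)$ satisfying conditions (1)--(4) of Theorem~\ref{theorem:hook-triv} exists. The part $\lambda_1$ named in condition~(4) must exist, which forces $n-b\ge 1$, i.e.\ $b<n$ (already part of the hypothesis). Condition~(4) gives $\lambda_1\ge \mu_1+2$, and hence $|\lambda|\ge\lambda_1\ge \mu_1+2$. Because $\mu_1>\dots>\mu_b\ge 0$ are distinct nonnegative integers, the combination $\mu_1+|\mu|=2\mu_1+\mu_2+\dots+\mu_b$ is minimized by the staircase $\mu=(b-1,b-2,\dots,1,0)$, giving $\mu_1+|\mu|\ge (b-1)+\binom b2=\binom{b+1}2-1$. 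Combining, $a+1=|\lambda|+|\mu|\ge (\mu_1+2)+|\mu|\ge \binom{b+1}2+1$, so $\binom{b+1}2\le a$.

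For the reverse implication, suppose $\binom{b+1}2\le a$; I would exhibit the extremal configuration that the previous inequality points to. Take $\mu=(b-1,b-2,\dots,1,0)$, so $|\mu|=\binom b2$ and $\mu_1=b-1$, and take $\lambda=(\lambda_1,0,\dots,0)$ (with $n-b\ge 1$ entries in all) where $\lambda_1=a+1-\binom b2$. Using the identity $\binom{b+1}2-\binom b2=b$ one checks that $\lambda_1\ge b+1\ge 1$, that $|\lambda|+|\mu|=\lambda_1+\binom b2=a+1$, and that $\lambda_1-1=a-\binom b2\ge b>b-1=\mu_1$; thus conditions (1)--(4) all hold and $\mom{S_{(a|b)}}_n\ge 1$.

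The computation is short, and the one spot that rewards a little care is the forward implication: rather than bounding $\mu_1$ and $|\mu|$ separately, one should bound the single quantity $\mu_1+|\mu|$ that survives after invoking condition~(4), and then apply $(b-1)+\binom b2=\binom{b+1}2-1$. That same identity is exactly what singles out the extremal pair — a staircase $\mu$ alongside one long row $\lambda$ — used in the reverse implication, and shows that $a=\binom{b+1}2$ is precisely the threshold of positivity, thereby recovering Theorem~4.8.3 of \cite{ALCO_2021__4_2_189_0}.
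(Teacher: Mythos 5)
Your proof is correct and follows exactly the route the paper intends: the paper states this corollary without proof (remarking only that it follows from the combinatorial interpretation), and its proof of the analogous positivity criterion for the sign representation after Theorem~\ref{theorem:hook-sign} uses the same extremal staircase-plus-one-long-row construction you give. Your forward direction, bounding the combined quantity $\mu_1+|\mu|\geq\binom{b+1}{2}-1$ and adding the $+2$ from condition~(4), together with your separate treatment of $b=0$, correctly fills in the details the paper leaves to the reader.
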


\begin{corollary}
\label{cor:pleth_hook_triv}
For all $a,b \geq 0$ with $b+1 \leq n$, the coefficient of $s_{(a|b)}$ in the expansion of the plethysm $s_{(n)}[1+h_1+h_2+\dotsb]$ is the number of pairs of partitions $(\lambda,\mu)$ that satisfy the conditions of Theorem \ref{theorem:hook-triv}.
\end{corollary}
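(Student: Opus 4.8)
The plan is to deduce Corollary~\ref{cor:pleth_hook_triv} from Littlewood's identity~\eqref{al:identity} combined with Theorem~\ref{theorem:hook-triv}, with no new computation required. First I would specialize~\eqref{al:identity} at $\mu=(n)$. Since $V_{(n)}$ is the trivial representation of $S_n$, the restriction coefficient $r_{\lambda(n)}$ is precisely the multiplicity of the trivial representation of $S_n$ in $\text{Res}^{GL_n(\CC)}_{S_n}W_\lambda(\CC^n)$; hence, reading off the coefficient of $s_\lambda$ in~\eqref{al:identity}, the coefficient of $s_{(a\mid b)}$ in $s_{(n)}[1+h_1+h_2+\dotsb]$ equals that multiplicity for $W_{(a\mid b)}(\CC^n)$.

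Next I would translate this multiplicity into a moment. By the Pieri relation~\eqref{eq:hookalt}, together with Theorem~\ref{th:charHE}, the character polynomial of the eventually polynomial sequence $\bigl(W_{(a\mid b)}(\CC^n)\bigr)_n$ of $S_n$-representations is $S_{(a\mid b)}$. The Remark following Definition~\ref{def:mom} then identifies the multiplicity of the trivial representation of $S_n$ in $W_{(a\mid b)}(\CC^n)$ with $\mom{S_{(a\mid b)}}_n$, valid for every $n$ at which $S_{(a\mid b)}$ actually computes the character of $W_{(a\mid b)}(\CC^n)$. I would note that the hypothesis $b+1\leq n$ keeps us inside this range: each $H_kE_l$ evaluates to the character of $\Sym^k(\CC^n)\otimes\bigwedge^l(\CC^n)$ for every $n$, and the Pieri expansion~\eqref{eq:hookalt} reproduces the Weyl character of the hook $W_{(a+1,1^b)}(\CC^n)$ as soon as $n$ is at least the number of rows $b+1$.

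Finally I would invoke Theorem~\ref{theorem:hook-triv}, which evaluates $\mom{S_{(a\mid b)}}_n$ as the number of pairs $(\lambda,\mu)$ of partitions satisfying its conditions~(1)--(4). Concatenating the three equalities --- coefficient of $s_{(a\mid b)}$ equals $r_{(a\mid b)(n)}$, equals $\mom{S_{(a\mid b)}}_n$, equals the pair count --- yields the corollary.

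There is essentially no obstacle; the only point needing care is the bookkeeping in the second step, namely confirming that $r_{(a\mid b)(n)}=\mom{S_{(a\mid b)}}_n$ holds exactly on the stated range $b+1\leq n$ rather than merely for all sufficiently large $n$. The same argument, run with $\mu=(1^n)$ and signed moments in place of moments, gives Corollary~\ref{cor:pleth_hook_sign}; indeed Corollaries~\ref{cor:pleth_twocol}, \ref{cor:pleth_hook_sign} and~\ref{cor:pleth_hook_triv} are all this one-line consequence of~\eqref{al:identity} applied to $\nu\in\{(1^n),(n)\}$.
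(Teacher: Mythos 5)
Your proposal is correct and follows exactly the route the paper intends: the corollary is stated as an immediate consequence of Littlewood's identity~\eqref{al:identity} specialized at $\mu=(n)$ together with Theorem~\ref{theorem:hook-triv}, with the multiplicity of the trivial representation identified with $\mom{S_{(a\mid b)}}_n$ via the remark after Definition~\ref{def:mom}. Your extra care about the validity range $b+1\leq n$ is a sensible addition that the paper leaves implicit.
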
%
\bibliography{bibfile}
\bibliographystyle{plain}
\end{document}